\theoremstyle{plain}
\theoremstyle{definition}
\newtheorem{thm}{Theorem}
\newtheorem{lemma}[thm]{Lemma}
\newtheorem{cor}[thm]{Corollary}
\newtheorem{prop}[thm]{Proposition}
\newtheorem{conj}[thm]{Conjecture}
\newtheorem{defn}[thm]{Definition}
\newcommand{\stem}{\textrm{stem}}
\newcommand{\ran}{\textrm{ran}}
\providecommand{\Dnd}{{\mathbb{D}_\textrm{nd}}}
\providecommand{\D}{\mathbb{D}}
\providecommand{\Dtree}{{\mathbb{D}_\textrm{tree}}}
\providecommand{\stem}{{\textrm{stem}}}
\providecommand{\dnd}{{d}_\textrm{nd}}
\begin{document}

\author[Palumbo]{Justin Palumbo}
\address{Justin Palumbo, Department of Mathematics,
                    University of California at Los Angeles,
                    Los Angeles, California}
\email{justinpa@math.ucla.edu}

\begin{abstract}
We give results exploring the relationship between dominating and unbounded reals in Hechler extensions, as well as
the relationships among the extensions themselves. We show that in the standard Hechler extension there is
an unbounded real which is dominated by every dominating real, but that this fails to hold in the tree Hechler extension.
We prove a representation theorem for dominating reals in the standard Hechler extension: every dominating real eventually
dominates a sandwich composition of the Hechler real with two ground model reals that monotonically converge to infinity. We apply our
results to negatively settle a conjecture of Brendle and L{\"o}we (Conjecture 15 of \cite{BrL}). We also answer a question due to Laflamme.
\end{abstract}

\title{Unbounded and Dominating Reals in Hechler Extensions}

\maketitle

\section{Introduction}

Forcing to add dominating reals is by now a ubiquitous technique in the study of the set theory of the reals and Hechler forcing is the most basic method for adding a dominating real to the universe. Three variations of Hechler forcing have been considered in the literature. Notationally little distinction has been made between them; all three have been commonly referred to as Hechler forcing and designated by the symbol $\mathbb{D}$. In this paper we refer to them in words as the original Hechler forcing, the non-decreasing Hechler forcing and the tree Hechler forcing and symbolically we use $\D$, $\Dnd$ and $\Dtree$, respectively.

Brendle, Judah and Shelah \cite{BrJS} used a rank analysis of $\Dnd$ originally due to Baumgartner and Dordal \cite{BD} to analyze the combinatorial consequences of forcing with $\Dnd$. They showed that in $V^\Dnd$ there is a MAD family of size $\omega_1$ and a Luzin set of size $2^\omega$. The existence of the latter implies that $\textrm{non}(\mathcal{M})=\omega_1$ and $\textrm{cov}(\mathcal{M})=2^\omega$ and thus completely determines Cicho\'{n}'s diagram of cardinal characteristics. They also showed how one can modify the rank analysis of $\Dnd$ to analyze $\D$ and prove that such objects exist in $V^\D$ as well. There is also a rank analysis for $\Dtree$ (see the definitions just before Theorem 12 in \cite{BrL}). The rank analysis for $\Dtree$ is simpler than for either $\D$ or $\Dtree$ and it is not hard to see that the same Brendle, Judah and Shelah arguments go through for $V^\Dtree$ as well.
 
Since all three forcings have the same effect on the standard cardinal characteristics of the continuum it is only natural to ask if all three forcings are the same. In this paper we will show that while $\D$ and $\Dnd$ are equivalent from the forcing point of view (and thus we may safely use the term Hechler forcing for both), $\Dtree$ is different. To accomplish this we compare the unbounded and dominating reals in $V^\D$ and $V^\Dtree$, and prove the following two theorems.

\begin{thm} \label{DtreeNotDom}
Let $d$ a be $\Dtree$-generic real over $V$. Then for any unbounded real $x\in\omega^\omega\cap V[d]$ there is some dominating real $y\in\omega^\omega\cap V[d]$ so that $x$ is not eventually dominated by $y$.
\end{thm}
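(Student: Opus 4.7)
First dispose of the trivial case: if $x$ is itself dominating, take $y:=x$. Otherwise, fix a $\Dtree$-name $\dot x$ and a condition $T_0\in\Dtree$ with $T_0\Vdash\dot x$ unbounded but not dominating.

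The plan is to build, in $V$, a $\Dtree$-name of the form
\[
\dot y(n) := \begin{cases}\dot d(n-1) & n \in \{n_k : k<\omega\},\\ \dot d(n) & \text{otherwise},\end{cases}
\]
where $\{n_k\}\subseteq\omega$ is an increasing $V$-sequence with gaps at least $2$. The sequence $\{n_k\}$, together with a fusion $T_0\geq T_1\geq\cdots$, is built so that at each stage $T_{k+1}\Vdash \dot x(n_k)\geq\dot d(n_k-1)$. This step exploits two features of $\Dtree$: (a) $T_k$ has infinite branching above its stem at every level, giving infinitely many possible successor values for $\dot d(n_k-1)$; and (b) since $\dot x$ is forced unbounded, the set of $n$ at which $\dot x(n)$ is \emph{not} decided by $T_k$ is infinite (otherwise $\dot x$ would be bounded by a ground-model real), providing a $V$-definable infinite pool from which to draw $n_k$. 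For each such $n_k$ and each successor value $m$ at level $n_k-1$, the sub-condition of $T_k$ below the corresponding node can be refined to force $\dot x(n_k)\geq m$; splicing these refinements across the successors yields $T_{k+1}$ while preserving the $\Dtree$-condition property.

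Let $T_\infty$ be the fusion limit. Then $T_\infty$ forces $\dot y(n_k)=\dot d(n_k-1)\leq\dot x(n_k)$ for every $k$, witnessing that $\dot y$ does not eventually dominate $\dot x$. For dominance, given $g\in V\cap\omega^\omega$: on $n\notin\{n_k\}$, $\dot y(n)=\dot d(n)\ge g(n)$ eventually; on $n=n_k$, defining the auxiliary $V$-function $\tilde g$ by $\tilde g(n_k-1):=g(n_k)$ (and zero elsewhere), the dominance $\dot d\ge^*\tilde g$ yields $\dot y(n_k)=\dot d(n_k-1)\ge g(n_k)$ eventually in $k$. A standard density argument (applying the construction below arbitrary $T'\leq T_0$) transfers the conclusion to the generic extension.

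The technical heart of the proof is the splicing in the fusion step: refining $T_k$ so that, uniformly over infinitely many successor values $m$ at level $n_k-1$, the refined condition forces $\dot x(n_k)\geq m$, while preserving infinite branching above the stem. This is where the tree structure of $\Dtree$ is essential; for the pair-based conditions of the standard Hechler $\D$, no analogous splicing is available, which is the structural reason the ``minimum unbounded real'' phenomenon for $\D$ cannot be recreated in $V^{\Dtree}$.
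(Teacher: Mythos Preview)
Your splicing step has a real gap. You justify only that infinitely many $n$ have $\dot x(n)$ \emph{not decided} by $T_k$, but ``not decided'' merely means at least two values are possible; it does not mean $\dot x(n_k)$ can be forced arbitrarily large. Yet your refinement demands exactly that: below the successor node with value $m$ at level $n_k-1$ you want to force $\dot x(n_k)\geq m$, and $m$ ranges over a cofinite set of integers. Nothing you have said rules out, e.g., that every extension of $T_k$ forces $\dot x(n_k)\leq 5$ while leaving the exact value undecided.

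The existence of some $n_k$ for which $\dot x(n_k)$ is genuinely unbounded below the current condition is not free: it is precisely the left-to-right direction of the paper's Lemma~\ref{mainTreeLemma}, whose proof is a separate fusion argument with a nontrivial case split. Even after one grants that lemma, your diagonal demand ``below the $m$-th successor force $\dot x(n_k)\geq m$'' is still too strong. From ``for every $i$ the stem favors $\dot x(n_k)>i$'' one only gets, for each fixed $i$, infinitely many successors $m$ with $s^\smallfrown m$ favoring $\dot x(n_k)>i$; one cannot in general match $i$ to $m$. The paper handles this by choosing a slowly growing $z\in V\cap\omega^{\nearrow\omega}$ with $(\forall s\in A)(\exists^\infty m)\, s^\smallfrown m$ favors $\dot x(\phi(s))\geq z(m)$, and then takes the witness to be $z\circ d'$ rather than a raw shift of $d$. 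Your choice $\dot y(n_k)=\dot d(n_k-1)$ is effectively insisting on $z(m)=m$, which need not work; concretely, if $\dot x(n_k)$ depends on $\dot d(n_k-1)$ via a very slow inverse (think $\dot x(n_k)\approx f^{-1}(\dot d(n_k-1))$ for fast $f$), then below the $m$-th successor one can at best force $\dot x(n_k)\approx f^{-1}(m)\ll m$. So the construction as written does not go through; to repair it you would need both the characterization lemma and the slow-composition idea that the paper uses.
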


\begin{thm} \label{DDom}
Let $d$ be a $\D$-generic real over $V$. Then there is an unbounded real $x\in\omega^\omega\cap V[d]$ so that for every dominating real $y\in\omega^\omega\cap V[d]$ we have that $x$ is eventually dominated by $y$.
\end{thm}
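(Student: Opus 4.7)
The plan is to deduce Theorem~\ref{DDom} from the representation theorem for dominating reals announced in the abstract: every dominating real $y \in V[d]$ eventually dominates a sandwich composition $f\circ d\circ g$, where $f,g \in V\cap \omega^\omega$ are monotone and tend to infinity. Granted this, it suffices to produce an unbounded $x \in V[d]$ that satisfies $x \leq^* f\circ d\circ g$ for every such pair $f,g$; then for each dominating $y$ the representation yields $f,g$ with $y \geq^* f\circ d\circ g \geq^* x$, as required.

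The candidate $x$ is built as a slow functional of the Hechler real itself. Note that each sandwich composition $f\circ d\circ g$ is itself dominating, by monotonicity of $f,g$ combined with the dominance of $d$ over every ground-model function; so being $\leq^*$-below every sandwich does not, in principle, force $x$ into the ground model. A natural ansatz is $x(n) = d(\phi(n))$ for a suitable slow $\phi \in V[d]$ tending to infinity, chosen so that $x$ stays below every $f \circ d \circ g$ while still beating each ground-model function infinitely often. Alternative candidates are inverse-type or counting-type functionals of $d$; the precise choice is dictated by balancing the two opposing requirements.

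I would verify unboundedness of $x$ by a density argument in $\D$: given $h\in V$ and any condition $(s,F)\in\D$, using the freedom in extending the stem $s$ subject to the side-condition $F$, one forces $x(n) > h(n)$ at some prescribed late $n$. Domination by each sandwich, $x(n) \leq f(d(g(n)))$ for all but finitely many $n$, is checked directly from monotonicity of $f,g$ together with dominance of $d$ over ground-model functions.

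The main obstacle is the representation theorem itself, which is the technical engine of the paper and requires a careful rank/fusion analysis of $\D$-names for dominating reals: one must extract ground-model $f,g$ from the name $\dot{y}$ by tracking how $\dot{y}(n)$ is decided by stems and by the side-condition function in a condition $(s,F)\in\D$. Once this is in hand, selecting $x$ so that it is both slow enough to lie below every sandwich (including arbitrarily slow ones) and fast enough to escape every ground-model bound is the remaining delicate point, but I expect it to be straightforward given the right ansatz for $\phi$.
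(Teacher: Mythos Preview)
Your overall strategy is exactly the paper's: invoke the representation theorem (Theorem~\ref{DomChar}, stated for the $\Dnd$-generic) to reduce to showing $x\leq^* z_0\circ d\circ z_1$ for all $z_0,z_1\in V\cap\omega^{\nearrow\omega}$, and check unboundedness by density. The gap is in the choice of $x$. Your primary ansatz $x(n)=d(\phi(n))$ runs into trouble with the \emph{outer} slow function: taking $z_1=\id$ and $z_0$ arbitrary, you would need $d(\phi(n))\leq z_0(d(n))$ eventually, for every $z_0\in V\cap\omega^{\nearrow\omega}$. Since the $\Dnd$-generic $d$ is nondecreasing and has arbitrarily long constant runs, on any such run $d(\phi(n))=d(n)$ and the inequality fails for slow $z_0$. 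More generally, values of $d$ are the wrong currency here, because $z_0$ compresses them by an arbitrary ground-model scale; any workable $x$ must be built from something $z_0$ cannot touch.

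The paper takes the counting route you mention but do not develop. Working with a $\Dnd$-generic $d$, it sets $x(n)$ equal to the largest $i$ such that $d$ is constant on $[k,k+i]$, where $k$ is least with $d(k)\geq n$. Unboundedness is an easy density argument. For domination one fixes $z_0,z_1$, picks $f\in V$ with $n<f(z_0(n))$ and $n<f(z_1(n))$ for all $n$, and shows $\langle s,f\rangle\Vdash(\forall^\infty n)\,x(n)<z_0(d(z_1(n)))$. The point is that if some large $n$ had $z_0(d(z_1(n)))\leq x(n)$, then the constancy of $d$ on the interval $[k,k+x(n)]$ forces $d(k)=d(z_1(n))=d\bigl(z_0(d(z_1(n)))\bigr)$, while the side condition $f$ forces $d(z_1(n))<f\bigl(z_0(d(z_1(n)))\bigr)\leq d\bigl(z_0(d(z_1(n)))\bigr)$, a contradiction. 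So the ``remaining delicate point'' you flag is not quite straightforward, and it genuinely requires the interval-length functional rather than a composition with $d$.
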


Thus the two forcings are not equivalent. We will derive Theorem \ref{DDom} from the following theorem, which we consider the main result of this paper. Let $\omega^{\nearrow\omega}$ denote the set of all monotonically nondecreasing members of $\omega^\omega$ which limit to infinity. Note that whenever $y$ is a dominating real and $z\in V\cap\omega^{\nearrow\omega}$ then $z\circ y$ and $y\circ z$ are also dominating. The next result shows that when adding a Hechler real this is in some sense the only way to get dominating reals. 

\begin{thm} \label{DomChar}
Let $d$ be a $\Dnd$-generic real over $V$ and let $y\in\omega^\omega\cap V[d]$ be dominating. Then there are $z_0,z_1\in V\cap\omega^{\nearrow\omega}$ so that $y$ eventually dominates $z_0\circ d\circ z_1$.
\end{thm}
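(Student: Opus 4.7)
Fix a condition $p_0 = (s_0, f_0) \in \Dnd$ forcing $\dot y$ to be dominating. It suffices, by a standard density argument, to produce $z_0, z_1 \in V \cap \omega^{\nearrow\omega}$ and an extension $q \leq p_0$ such that $q \Vdash \dot y$ eventually dominates $z_0 \circ \dot d \circ z_1$. My approach is a rank analysis tailored to $\Dnd$. For each $n$ and each valid nondecreasing stem $t$ extending $s_0$ (so $t(j) \geq f_0(j)$ for $|s_0| \leq j < |t|$), define
\[ r_n(t) = \min\bigl\{v \in \omega : \exists\, q' \leq (t, f_0),\ q' \Vdash \dot y(n) = v\bigr\}. \]
This minimum is a natural number, since below $(t, f_0)$ there are extensions deciding $\dot y(n)$ and the set of forced values is a nonempty subset of $\omega$. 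Moreover, $r_n(t) \leq r_n(t')$ whenever $t' \supseteq t$. The pivotal observation is that for any generic $d$ with $p_0 \in G$ and any $k \geq |s_0|$, one has $y(n) \geq r_n(d|_k)$: the $q' \in G$ deciding $\dot y(n) = y(n)$ automatically refines $(d|_k, f_0)$, so $y(n)$ lies in the set over which the minimum defining $r_n(d|_k)$ is taken.

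Next I would build $z_1 \in V \cap \omega^{\nearrow\omega}$ inductively. With $z_1(n)$ chosen, set
\[ \psi_n(m) = \min\bigl\{r_n(t) : t \text{ a valid stem of length } z_1(n)+1 \text{ with } t(z_1(n)) = m\bigr\}. \]
Since $d|_{z_1(n)+1}$ is always one such stem (with terminal entry $d(z_1(n))$), the bound $y(n) \geq \psi_n(d(z_1(n)))$ holds for every generic $d$ extending $p_0$. The theorem then reduces to producing some $z_0 \in V \cap \omega^{\nearrow\omega}$ with $z_0(m) \leq \psi_n(m)$ for all sufficiently large $n$ (a threshold allowed to depend on $m$).

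The hard part is arranging for this nondecreasing unbounded $z_0$ to exist. The plan is to first strengthen $p_0$ to $(s_0, f^*)$ with $f^*$ growing very fast, and then to choose $z_1(n)$ at each stage so large that the rank $r_n(t)$ for stems of length $z_1(n)+1$ with terminal entry $m$ grows uniformly in $m$, the interior entries of $t$ being squeezed between $f^*$ and $m$. The dominating assumption on $\dot y$ enters here decisively: were no such uniform lower envelope available, a fusion-type diagonalization would yield a ground-model $g$ and a condition forcing $\dot y(n) < g(n)$ for infinitely many $n$, contradicting that $\dot y$ dominates $g$. Coordinating this induction so that the interleaved choices of $z_1(n)$ and $f^*$ leave room to extract a monotone, unbounded $z_0 \in V$ is the principal technical obstacle, analogous in spirit to the Baumgartner--Dordal rank analyses invoked in the introduction.
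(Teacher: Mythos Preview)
Your framework is sound up to a point: $r_n(t)$ is well-defined, the monotonicity $r_n(t) \leq r_n(t')$ for $t \subseteq t'$ holds, and the bound $y(n) \geq r_n(d\upharpoonright k)$ is correct. But the reduction you state has the quantifiers running the wrong way. You claim it suffices that for each $m$ one has $z_0(m) \leq \psi_n(m)$ for all sufficiently large $n$; yet when you apply this with $m = d(z_1(n))$ you would need $n \geq N(d(z_1(n)))$, and since $d(z_1(n))$ is generic and can vastly exceed any ground-model function of $n$, there is no reason this should hold. Indeed $\psi_n(m)$ is only defined when $m \geq f^*(z_1(n))$ (otherwise no valid nondecreasing stem of length $z_1(n)+1$ ends in $m$), so for fixed $m$ the relevant $n$'s form an \emph{initial} segment, not a tail. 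What you actually need is $z_0(m) \leq \psi_n(m)$ for \emph{all} $n$ in a range $[N_0,N_1(m)]$, and for an unbounded $z_0$ to exist this forces $\min_{N_0 \leq n \leq N_1(m)} \psi_n(m) \to \infty$ as $m \to \infty$.

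More fundamentally, the step you correctly flag as the principal technical obstacle is the entire content of the theorem, and the proposal does not resolve it beyond naming a fusion. The paper's route differs in a way that matters. Instead of your $r_n(t)$ (tied to a fixed side commitment $f_0$), the paper defines $\phi(s)$ as the least $N$ such that for every $n \geq N$ and every $i$ \emph{some} condition with stem $s$ forces $i \leq \dot y(n)$; the commitment may vary with $i$. The key lemma is that if $\dot y$ is dominating then below some $q$ one has $\phi(s) \leq \psi(|s|)$ for a single $\psi \in \omega^\omega$---a \emph{length bound} on $\phi$. This is proved in two stages: first, a direct diagonalization against dominance shows one can pass below a $q$ avoiding a ``bad set'' of stems (those $s$ admitting fixed-length extensions $t_l$ with $t_l(0)\to\infty$ and $\phi(s\smallfrown t_l)\to\infty$); second, a finitary tree recursion converts ``no bad stems below $q$'' into an explicit $\psi$. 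Once length-boundedness is available, $z_1$ is essentially the inverse of $\psi$, and $z_0$ is produced by an easy diagonalization (Proposition~\ref{mainProp}) over a countable family of functions. Your plan contains no analogue of the length-boundedness step; without it, the interleaved construction of $f^*$, $z_1$, and $z_0$ you sketch cannot get started.
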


The paper is organized as follows. In section 2 we will define the Hechler extensions under consideration and compare them. We will show there that $\D$ and $\Dnd$ are forcing equivalent. We will show that $\D$ and $\Dtree$ are different, although each completely embeds into the other. In section 3 we will focus on $\Dtree$ and prove Theorem \ref{DtreeNotDom}. In section 4 we prove Theorem \ref{DomChar} and use it to obtain Theorem \ref{DDom}. We also give some applications to work of Laflamme \cite{La} and Brendle and L{\"o}we \cite{BrL}. Finally in section 5 we will discuss forcing extensions where no analogue of Theorem \ref{DomChar} holds.


Our notation and terminology is mostly standard. We use $\omega^\omega$ to refer to the set of all functions on the natural numbers, and often we will call elements of $\omega^\omega$ reals. We use $\leq^*$ to refer to the preorder of eventualy domination on $\omega^\omega$. This means that we have $$x\leq^*y \Leftrightarrow (\forall^\infty n) x(n)\leq y(n).$$ A dominating real in a generic extension is a real $y\in\omega^\omega$ for which for all $f\in V\cap\omega^\omega$ we have $f\leq^*y$. An unbounded real in a generic extension is a real $x\in\omega^\omega$ for which for all $f\in V\cap\omega^\omega$ we have $x\not\leq^* f$.

We use $\mathbb{C}$ to signify Cohen forcing, whose conditions we will take to come from either $2^{<\omega}$ or $\omega^{<\omega}$ as the situation demands. When $\mathbb{P}$ is a ccc notion of forcing we abuse notation somewhat and let $V^\mathbb{P}\cap\omega^\omega$ denote the collection of nice names for reals. For two forcing notions $\mathbb{P}$ and $\mathbb{Q}$ we will use $\mathbb{P}\equiv\mathbb{Q}$ to denote forcing equivalence which means 1) for any $G$ a $\mathbb{P}$-generic filter over $V$ there is some $H\in V[G]$ which is a $\mathbb{Q}$-generic filter over $V$ and for which $V[G]=V[H]$ and 2) vice versa: for any $H$ a $\mathbb{Q}$-generic filter over $V$ there is some $G\in V[H]$ which is a $\mathbb{P}$-generic filter over $V$ and for which $V[G]=V[H]$.

\section{Notions of Hechler forcing}

In this section we will define and compare the three variations of Hechler forcing under consideration. All three are $\sigma$-centered partial orderings adding a dominating real, and each consists of two parts: a stem giving a finite approximation of the real being added, and a commitment restricting the possible values the real may take beyond the stem.

The original Hechler forcing $\D$ was introduced by Hechler \cite{H}. In that paper Hechler used nonlinear iterations of $\D$ to prove that for any $\sigma$-directed partially ordered set $P$ there is a generic extension in which $P$ is isomorphic to a cofinal subset of $(\omega^\omega,\leq^*)$. Conditions in $\D$ are pairs $\langle s,f\rangle$ where $s\in\omega^{<\omega}$ and $f\in\omega^\omega$. The ordering is given by $$\langle s',f'\rangle\leq\langle s,f\rangle \Leftrightarrow s\subseteq s', (\forall n)f(n)\leq f'(n)\textrm{ and }(\forall n\in |s'|\setminus |s|)f(n)\leq s'(n).$$

The nondecreasing Hechler forcing $\Dnd$ is the same as $\D$ except that we insist that $s$ be monotonically nondecreasing. This slight tweaking of Hechler forcing was first used by Baumgartner and Dordal in \cite{BD} where among other things they showed that by iterating $\Dnd$ over a model CH one obtains a model where the splitting number $\mathfrak{s}$ is strictly less than the bounding number $\mathfrak{b}$.

The tree Hechler forcing $\Dtree$ is a special case of the forcings made up of trees branching into a filter that were considered by Groszek \cite{Gr}. The forcing $\Dtree$ was first explicitly used by Brendle and L{\"o}we \cite{BrL} to obtain by iteration a model where $\Delta^1_2(\D)$ holds and $\Delta^1_2(\mathbb{E})$ fails. Conditions in $\Dtree$ are trees $T\subseteq\omega^{<\omega}$ with a distinguished stem $s=\stem(T)$ so that for all $t$ in $T$ either $s$ extends $t$ or $t$ extends $s$ and so that whenever $t$ in $T$ extends $s$ we have $(\forall^\infty n)t\smallfrown n\in T$. The forcing is ordered by inclusion: $T'\leq T$ exactly when $T'\subseteq T$.

Though the difference in the definitions of $\D$ and $\Dnd$ is slight and one often appears in arguments where the other would serve just as well, the two have occasionally been treated as separate entities, as in \cite{BrJS}. Intuitively there should be little difference but whether the two are actually equivalent appears to have been an open question. See for example the discussion after definition 3.1.9 in \cite{BaJ}.

The two forcing extensions are in fact the same. This theorem is joint with Itay Neeman.

\begin{thm} \label{HechlerEquiv}
$\D\equiv\Dnd$.
\end{thm}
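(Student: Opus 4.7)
The plan is to prove $\D \equiv \Dnd$ by exhibiting an explicit projection $\psi : \D \to \Dnd$ and analyzing the resulting quotient forcing. I define $\psi(\langle s, f\rangle) = \langle \bar{s}, \bar{f}\rangle$, where $\bar{s}(n) = \max\{s(k) : k \leq n\}$ is the running maximum of $s$ and $\bar{f}(n) = \max(f(n), \bar{s}(|s|-1))$. A direct calculation verifies that $\psi$ is order-preserving, and that given $p \in \D$ and $q \leq_{\Dnd} \psi(p)$, one may lift to some $p' \leq_{\D} p$ with $\psi(p') \leq_{\Dnd} q$ by concatenating the (possibly non-monotonic) stem of $p$ with the tail of $q$'s stem and taking pointwise maxima of commitments. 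Hence $\psi$ is a projection, and the factor theorem gives $\D \equiv \Dnd * \dot{Q}$ for a suitable quotient name $\dot{Q}$.

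Next I analyze $\dot{Q}$ in $V[\Dnd]$: a $\dot{Q}$-condition is a $\D$-condition $\langle s, f\rangle$ whose monotonization $\bar{s}$ equals $\bar{d} \upharpoonright |s|$ (where $\bar{d}$ is the $\Dnd$-generic real) and whose $\bar{f}$ is pointwise dominated by $\bar{d}$ on $[|s|, \infty)$. At strictly increasing positions of $\bar{d}$ the value $s(n)$ is forced to equal $\bar{d}(n)$, but at the plateau positions (where $\bar{d}(n) = \bar{d}(n-1)$, which generically occur infinitely often since both strict-increase and plateau extensions are dense in $\Dnd$) the value $s(n)$ may range freely over the bounded set $[0, \bar{d}(n)]$. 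Thus $\dot{Q}$ is essentially Cohen-like: it amounts to choosing, at each plateau of $\bar{d}$, an arbitrary bounded perturbation, together with a commitment pointwise below $\bar{d}$.

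To complete the equivalence it then suffices to show $\Dnd \equiv \Dnd * \dot{Q}$, i.e., that $\Dnd$ absorbs $\dot{Q}$. My plan is to construct inside $V[\Dnd]$ a $V$-generic filter for $\dot{Q}$ directly from the structure of $\bar{d}$ together with a ground-model coding, using that $\Dnd$ is $\sigma$-centered and already adds Cohen reals. The main obstacle I expect is precisely this absorption step: one must ensure the derived filter meets every $V$-dense subset of $\dot{Q}$, which reduces to a density argument exploiting the combinatorial flexibility of $\Dnd$-extensions at plateau positions, namely that one can independently control both the height of the plateaus and the specific bounded values to be placed there. If the absorption proves too delicate to carry out directly, a fallback is to bypass the quotient analysis and instead prove $\D \equiv \Dnd$ by exhibiting an explicit isomorphism between countable dense subsets of the Boolean completions of the two forcings, which is a familiar strategy for showing equivalence of $\sigma$-centered notions of forcing of size continuum.
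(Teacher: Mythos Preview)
Your outline parallels the paper's two-step strategy: factor $\D$ as $\Dnd$ followed by something Cohen-like, then absorb that Cohen part back into $\Dnd$. The paper carries out both steps explicitly at the level of generic reals rather than via projections. For the first step it uses the \emph{tail minimum} $\dnd(n)=\min\{d(k):k\geq n\}$ rather than your running maximum; with that choice the leftover $d-\dnd$ restricted to the plateau set $A=\{n:\dnd(n)=\dnd(n+1)\}$ is honestly Cohen over $V[\dnd]$ (values ranging over all of $\omega$, not a bounded interval), and the reverse direction is a one-line check. Your running-max projection is also correct, but the quotient it produces is a bounded product $\prod_{n\in A}[f(n),\bar d(n)]$ and you still owe an argument that this is genuinely Cohen in $V[\bar d]$; you assert it is ``Cohen-like'' without proof.

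The real gap is the absorption step, which you yourself flag as the obstacle and do not carry out. This is where the paper's content lies: it shows $\Dnd\equiv\Dnd*\mathbb{C}$ by an explicit coding trick. Given a $\Dnd$-generic $d$, let $I_k(d)$ be the interval on which $d$ takes its $k$-th value; then the parity sequence $c(k)=|I_k(d)|\bmod 2$ is Cohen over $V[d_0]$, where $d_0$ is the $\Dnd$-real with the same range but intervals of half the length (rounded up), and $V[d]=V[d_0][c]$. The reverse is equally explicit. Nothing in your proposal supplies a mechanism of this kind, and there is no soft reason to expect one: $\sigma$-centeredness alone does not guarantee Cohen-absorption.

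Finally, your fallback is incorrect. Neither $\D$ nor $\Dnd$ (nor their Boolean completions) has a countable dense subset: if one did, the forcing would be equivalent to Cohen forcing, but Hechler forcing adds a dominating real and Cohen forcing does not. So the back-and-forth between countable dense subsets is unavailable here.
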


\begin{proof}
The proof comes in two steps. First we will prove that $\Dnd*\mathbb{C}\equiv\D$, and then we will prove that $\Dnd*\mathbb{C}\equiv\Dnd$.

Suppose $d$ is a $\mathbb{D}$-generic real over $V$. Define the real $\dnd$ by $$\dnd(n)=\min\{d(k):k\geq n\}.$$ Then $\dnd$ is a $\Dnd$-generic real over $V$. Let $d'=d-\dnd$. Now while $d'$ is a Cohen real over $V$ it is not quite true that it is Cohen over $V[\dnd]$. This is because whenever $\dnd(n)\not=\dnd(n+1)$ we have $d(n)=\dnd(n)$. But this is the only barrier. Let $A$ be the set $\{n:\dnd(n)=\dnd(n+1)\}$. Then $d'\upharpoonright A$ is a Cohen real over $V[d]$ (where for Cohen forcing we use the forcing consisting of sequences of natural numbers with domain a finite subset of $A$). Furthermore $V[d]=V[\dnd][d'\upharpoonright A]$.

Going the other way, suppose $d_0$ is a $\Dnd$-generic real over $V$. Let $A$ be the set $\{n:d_0(n)=d_0(n+1)\}$ and suppose $c$ is generic over $V[d_0]$ for the forcing consisting of sequences of natural numbers with domain a finite subset of $A$. Letting $c_0$ agree with $c$ on $A$ and take the value $0$ outside of $A$, we have that $d=d_0+c_0$ is a $\D$-generic real. Since $d_0=\dnd$ we have $V[d]=V[d_0][c]$. Thus $\Dnd*\mathbb{C}\equiv\D$. 


It remains to show $\Dnd*\mathbb{C}\equiv\Dnd$. Towards that end suppose that $d$ is a $\Dnd$-generic real over $V$. Let $\{r_k:k\in\omega\}\subseteq\omega$ enumerate the range of $d$ in increasing order. Let $I_k(d)$ be the interval on which $d$ takes value $r_k$. Let $c\in 2^\omega$ be defined so that $c(k)$ is equal to the parity of the length of the interval $I_k(d)$. We define $d_0$ to be the nondecreasing real with the same range as $d$ but for which $I_k(d_0)$ has half the length (rounded up) of $I_k(d)$. Then it is straightforward to check that $d_0$ is a $\Dnd$-generic real and that $c$ is a Cohen real over $V[d_0]$. Also $V[d]=V[d_0][c]$.

This process is reversible. Given a $\Dnd$-generic real $d_0$ and a Cohen real $c\in 2^\omega$ let $d$ be the nondecreasing real with the same range as $d_0$ and for which the length of $I_k(d)$ is equal to $c(k)$ plus twice the length of $I_k(d_0)$. Then $d$ is $\D$-generic over $V$ and $V[d]=V[d_0][c]$. This completes the proof.
\end{proof}

Now we compare the forcings $\D$ and $\Dtree$. The next proposition shows that each is a subforcing of the other.

\begin{prop} \label{mutAdd}
Forcing with $\Dtree$ adds a $\D$-generic real, and forcing with $\D$ adds a $\Dtree$-generic real.
\end{prop}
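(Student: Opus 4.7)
My plan is to verify both halves of the proposition using the natural map $\phi : \D \to \Dtree$ defined by $\phi(\langle s, f\rangle) = T_{s,f}$, where $T_{s,f}$ is the tree with stem $s$ whose successor set at each node of length $n \geq |s|$ is $\{k : k \geq f(n)\}$. Note $T_{s,f}$ is itself a $\Dtree$-condition, and $\phi$ is easily checked to be order- and incompatibility-preserving. The crucial observation I will use repeatedly is that if $T \in \Dtree$, $\langle s, f\rangle \in \D$, and $s \in T$, then $T \cap T_{s,f}$ is again a $\Dtree$-condition, since cofinite branching at each node is preserved under intersection with a uniform cone.

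For the direction that $\Dtree$ adds a $\D$-generic, I will argue that the $\Dtree$-generic real $d = \bigcup\{\stem(T) : T \in G\}$ is itself $\D$-generic over $V$. Given a dense open $D \subseteq \D$ in $V$, I will show the set
\[
\mathcal{D} := \{T \in \Dtree : (\exists \langle s,f\rangle \in D)\,(s \in T \text{ and } T \subseteq T_{s,f})\}
\]
is dense in $\Dtree$. Given $T_0 \in \Dtree$ with stem $s_0$, I will choose $s_1 \in T_0$ extending $s_0$, apply density of $D$ to $\langle s_1, 0\rangle$ to obtain $\langle s_2, f_2\rangle \in D$ with $s_1 \subseteq s_2$, and take $T := (T_0 \upharpoonright s_2) \cap T_{s_2, f_2}$, which by the observation above is a $\Dtree$-condition below $T_0$ witnessing $T \in \mathcal{D}$. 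The main technical step is arranging the construction so that $s_2 \in T_0$; this is the principal obstacle, and I expect to handle it by iterating the choice of $s_1$, using the cofinite branching of $T_0$ to force the eventual $D$-stem into $T_0$.

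For the direction that $\D$ adds a $\Dtree$-generic, the $\D$-generic $d$ itself cannot serve, since by genericity $d$ has infinitely many descents while it is dense in $\Dtree$ to require branches to be non-decreasing past the stem. Instead I will use the nondecreasing real $\dnd \in V[d]$ arising from the proof of Theorem \ref{HechlerEquiv} and argue $\dnd$ is $\Dtree$-generic by verifying that $\phi$ restricted to $\Dnd$ is a complete embedding into $\Dtree$. The remaining content is maximal antichain preservation: for a maximal antichain $A \subseteq \Dnd$ and any $T \in \Dtree$, I will apply maximality of $A$ to the $\Dnd$-condition $\langle \sigma, 0\rangle$, where $\sigma$ is a non-decreasing transformation of $\stem(T)$, and do a case analysis (based on whether the resulting $\langle s, f\rangle \in A$ has stem $s \subseteq \sigma$ or $\sigma \subseteq s$) to exhibit $T_{s,f}$ compatible with $T$ in $\Dtree$. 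The main obstacle is again the mismatch between uniform bounds and per-node structure, navigated by the intersection principle noted above.
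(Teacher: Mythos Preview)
Both directions of your plan have genuine gaps.

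For the direction ``$\Dtree$ adds a $\D$-generic real,'' your claim that the $\Dtree$-generic real $d$ is itself $\D$-generic is false. In $\D$, the set
\[
D_N=\{\langle s,f\rangle\in\D:(\exists n\geq N)\;s(n+1)<s(n)\}
\]
is dense for every $N$ (extend the stem past $N$, then append a large value followed by a smaller one still above $f$), so any $\D$-generic real has infinitely many descents. By contrast, given any $T_0\in\Dtree$ with stem $s_0$, the subtree $T_1=\{t\in T_0:t\text{ is nondecreasing on }[|s_0|,|t|)\}$ still has cofinite branching at every node and hence lies in $\Dtree$; thus it is dense in $\Dtree$ to force $d$ eventually nondecreasing. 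Your set $\mathcal{D}$ therefore fails to be dense below such a $T_1$ when $D=D_N$: any witness $\langle s,f\rangle\in D_N$ has a non-nondecreasing stem, which cannot appear in $T_1$. No amount of iterating the choice of $s_1$ rescues this, and $\phi$ is not a complete embedding. The paper instead splits the $\Dtree$-generic as $d'(n)=\lfloor d(n)/2\rfloor$ (still tree Hechler) together with the parity real $c$ (Cohen over $V[d']$), and then invokes Truss's theorem that a dominating real plus a Cohen real over it produces a $\D$-generic.

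For the direction ``$\D$ adds a $\Dtree$-generic real,'' your candidate $\dnd$ also fails. By the same density argument as above with strict inequality, the $\Dtree$-generic real is eventually \emph{strictly} increasing, whereas $\dnd$ satisfies $\dnd(n)=\dnd(n+1)$ for infinitely many $n$ (this is exactly the set $A$ used in Theorem~\ref{HechlerEquiv}). Moreover, even granting your embedding claim, a complete embedding $\phi:\Dnd\to\Dtree$ would show that $\Dtree$ adds a $\Dnd$-generic, not the converse; you have the direction reversed. The paper's argument here (due to Brendle and L\"owe) instead iterates $d$ along itself: once $n<d(n)$ holds, set $d'(n+1)=d(d'(n))$, and check that this diagonal real is $\Dtree$-generic.
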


\begin{proof}
That forcing with $\D$ adds a $\Dtree$-generic real was observed by Brendle and L{\"owe} in \cite{BrL}. Given $d$ a $\mathbb{D}$-generic let $N\in\omega$ be such that $N\leq n$ implies $n<d(n)$. Define $d'$ by $d'(n)=d(n)$ for $n\leq N$, and recursively $d'(n+1)=d(d'(n))$ for $N\leq n$. Then $d'$ is a $\Dtree$-generic real over $V$.

For the other direction let $d$ be a $\Dtree$-generic real over $V$. Take $d'$ to be defined by letting $d'(n)$ take the value of half that of $d(n)$, rounded down. It is not difficult to check that $d'$ is also a tree Hechler real over $V$. Now define $c\in 2^\omega$ by setting $c(n)$ equal to the parity of $d(n)$. Then $c$ is Cohen over $V[d']$. A theorem of Truss \cite{Tr} says that given $d'$ any dominating real over V and $c'\in\omega^\omega$ any Cohen real over $V[d']$ one has that $d'+c'$ is a $\mathbb{D}$-generic real over $V$. This completes the proof.
\end{proof}

We will show that $\D$ and $\Dtree$ are not forcing equivalent, despite the fact that each of the two forcings adds a generic real for the other. There appears to be no other example of the failure of the natural Cantor-Bernstein theorem for forcing notions in the literature. After finding this result the present author asked on Mathoverflow whether such examples had previously been known. There, based on a conversation with Arthur Apter, Joel David Hamkins produced another example. He showed that if one takes $\mathbb{P}$ to be the forcing to add a Cohen subset of $\omega_2$ and $\mathbb{S}$ to be the forcing to add a stationary nonreflecting subset of $\omega_2$, then together $\mathbb{P}$ and $\mathbb{P}*\mathbb{S}$ give such an example. The reader may find more details at \cite{MO}.

We now give some notation and terminology for stems consistent with that introduced in \cite{BrL}. We will be using the same terminology for $\D$ and $\Dtree$; which forcing notion we mean will be clear from context.

First we consider $\D$ (and $\Dnd$). For a condition $p=\langle s,f\rangle$ and $t\in\omega^{<\omega}$ we write $t\leq p$ to mean $$s\subseteq t\textrm{ and }(\forall n\in |t|\setminus |s|)t(n)\geq f(n).$$ We say that $s\in\omega^{<\omega}$ forces a formula $\varphi$ if there exists some commitment $f$ for which $\langle s,f\rangle\Vdash\phi$. Let $A\subseteq\omega^{<\omega}$. We will say that $s$ favors $A$ if for every choice of commitment $f$ there is some $t\in A$ so that $t\leq\langle s,f\rangle$. We say that $s$ favors $\varphi$ if $s$ favors the set $\{t\in\omega^{<\omega}:t\textrm{ forces }\varphi\}$. Notice that $s$ favors $\varphi$ exactly when $s$ does not force $\neg\varphi$.

Our terminology for $\Dtree$ is similar. We write $t\leq T$ to mean $\stem(T)\subseteq t$ and $t\in T$. We say $s$ forces $\varphi$ when there is $T\in\Dtree$ with $\stem(T)=s$ and $T\Vdash\varphi$. We say that $s$ favors $A$ if for every $T\in\mathbb{D}$ with $\stem(T)=s$ there is $t\leq T$ with $t\in A$. When $T\in\Dtree$ and $\stem(T)\subseteq t$, write $T_t$ for the tree with $\stem(T_t)=t$ containing exactly the initial segments of $t$ and the extensions of $t$ in $T$.

Since any two conditions with the same stem are compatible any condition with stem forcing $\varphi$ may be strengthened to a condition forcing $\varphi$.

\section{Unbounded and dominating reals in the tree Hechler extension}

Our goal in this section is to prove Theorem \ref{DtreeNotDom}. The following easy proposition characterizing the unbounded reals in a generic extension gives the motivation for our method. We leave the proof to the reader.

\begin{prop}\label{simpleUnboundedchar}
Let $\mathbb{P}$ be an arbitrary notion of forcing, and let $\dot{x}\in V^{\mathbb{P}}\cap\omega^\omega$. Then $$\Vdash_{\mathbb{P}}``\dot{x}\textrm{ is unbounded}" \Longleftrightarrow (\forall p\in\mathbb{P})(\exists^{\infty}n)(\forall i)p\not\Vdash\dot{x}(n)\leq i.$$
\end{prop}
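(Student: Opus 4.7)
The plan is to prove each direction of the biconditional by contrapositive, using the standard trick of assembling a ground-model upper bound out of values that a single condition forces.

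For the $(\Rightarrow)$ direction, I would suppose the right-hand side fails. Negating the statement yields some $p\in\mathbb{P}$ and some $N\in\omega$ such that for every $n\geq N$ there is some $i\in\omega$ with $p\Vdash\dot{x}(n)\leq i$. Working in the ground model, define $f\in V\cap\omega^\omega$ by setting $f(n)=0$ for $n<N$ and letting $f(n)$ equal the least $i$ satisfying $p\Vdash\dot{x}(n)\leq i$ for $n\geq N$. Then $p\Vdash\dot{x}(n)\leq f(n)$ for every $n\geq N$, so $p\Vdash\dot{x}\leq^* f$, and therefore $\dot{x}$ is not forced to be unbounded.

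For the $(\Leftarrow)$ direction, assume the right-hand side and suppose toward contradiction that $\dot{x}$ is not forced to be unbounded. Then there exist $p\in\mathbb{P}$ and $f\in V\cap\omega^\omega$ with $p\Vdash\dot{x}\leq^* f$. Unpacking the meaning of $\leq^*$, the set of conditions which decide a specific threshold $N$ witnessing eventual domination is dense below $p$, so I obtain some $q\leq p$ and $N\in\omega$ with $q\Vdash(\forall n\geq N)\,\dot{x}(n)\leq f(n)$. Applying the right-hand side to $q$ furnishes infinitely many $n$, and in particular some $n\geq N$, for which no $i\in\omega$ has $q\Vdash\dot{x}(n)\leq i$; but $i=f(n)$ works, yielding the contradiction.

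The only step that isn't entirely mechanical is the density argument in $(\Leftarrow)$ that fixes a specific threshold $N$ below some $q\leq p$, and this is a routine consequence of the forcing theorem applied to the existential quantifier hidden inside $\leq^*$. Once this is in place, both directions reduce to quoting the definitions.
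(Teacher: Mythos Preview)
Your argument is correct; both directions are handled cleanly and the density step in $(\Leftarrow)$ is indeed routine. The paper does not supply a proof of this proposition (it is explicitly left to the reader), so there is nothing to compare against, but your write-up is exactly the standard argument one would expect.
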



In order to prove Theorem \ref{DtreeNotDom} we give a strengthening of Proposition \ref{simpleUnboundedchar} for the case where $\mathbb{P}=\Dtree$. We give a characterization of the unbounded reals in the tree Hechler extension expressed using stems rather than outright conditions.

\begin{lemma}\label{mainTreeLemma}
Fix $\dot{x}\in V^{\Dtree}$. Set $A=\{t\in\omega^{\omega}|(\exists n\geq|t|)(\forall i)t\textrm{ favors }i<\dot{x}(n)\}$. Then $$\Vdash_{\Dtree}``\dot{x}\textrm{ is unbounded}"\Longleftrightarrow\textrm{ every $s\in\omega^{<\omega}$ favors $A$.}$$
\end{lemma}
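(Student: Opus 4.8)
The plan is to translate the stem-language defining $A$ into ordinary forcing language and then prove the two implications separately; the backward implication will be a short application of Proposition~\ref{simpleUnboundedchar}, and the forward one a fusion argument.

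First I would record the translation. For $\Dtree$, just as for $\D$, a stem $t$ favors a formula $\varphi$ exactly when $t$ does not force $\neg\varphi$ --- here one uses that any two conditions with a common stem are compatible, and that $T'\leq T$ implies $\stem(T)\subseteq\stem(T')$. Hence ``$t$ favors $i<\dot x(n)$'' means ``$t$ does not force $\dot x(n)\leq i$'', so, writing $\lambda(t,n)\in\omega\cup\{\infty\}$ for the least $i$ with $t$ forcing $\dot x(n)\leq i$, we get that $t\in A$ iff some $n\geq|t|$ has $\lambda(t,n)=\infty$ (that is, $\dot x(n)$ is unbounded below the stem $t$), and $t\notin A$ iff $\lambda(t,n)<\infty$ for all $n\geq|t|$.

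For the implication $(\Leftarrow)$ I would use Proposition~\ref{simpleUnboundedchar}. Fix $T\in\Dtree$ and $m\in\omega$. Choose $t_1\in T$ with $\stem(T)\subseteq t_1$ and $|t_1|\geq m$; since $t_1$ favors $A$ there is $t_2\leq T_{t_1}$ with $t_2\in A$, and then some $n\geq|t_2|\geq m$ has $\lambda(t_2,n)=\infty$. If $T\Vdash\dot x(n)\leq i$ for some $i$, then $T_{t_2}\leq T$ would witness $\lambda(t_2,n)\leq i$, a contradiction; so $(\forall i)\,T\not\Vdash\dot x(n)\leq i$ with $n\geq m$. As $T$ and $m$ were arbitrary, $(\forall T)(\exists^\infty n)(\forall i)\,T\not\Vdash\dot x(n)\leq i$, and Proposition~\ref{simpleUnboundedchar} gives $\Vdash_{\Dtree}``\dot x\text{ is unbounded}"$.

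For the implication $(\Rightarrow)$ I would argue contrapositively: supposing some $s$ fails to favor $A$, fixed by a $T$ with $\stem(T)=s$ and $\lambda(t,n)<\infty$ for all $t\in T$ with $s\subseteq t$ and $n\geq|t|$, I will build $T^*\leq T$ and $f\in\omega^\omega\cap V$ with $T^*\Vdash\dot x\leq^* f$, contradicting $\Vdash``\dot x\text{ unbounded}"$. The engine is the observation that for $v\in T$ with $s\subseteq v$ and $n>|v|$, any condition with stem $v$ forcing $\dot x(n)\leq\lambda(v,n)$ also forces this bound above all but finitely many of its children of $v$, so $\sup\{\lambda(v^\smallfrown j,n):v^\smallfrown j\in T\}<\infty$; hence one can force $\dot x(n)$ bounded by a ground-model value from the stem $v$ \emph{without} raising the threshold of the tree at $v$ itself, by grafting witnessing subtrees above the children of $v$. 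Running this as a fusion $T^*=\bigcap_k T_k$ --- at each stage treating one node and one level, grafting witnessing subtrees and raising thresholds only at deeper nodes --- produces $T^*$ together with a ground-model $f$ so that along every branch of $T^*$ some condition in the generic filter forces $\dot x(n)\leq f(n)$ for each $n>|s|$, i.e.\ $T^*\Vdash\dot x\leq^* f$. The main obstacle, and where I would be most careful, is making this fusion converge: one must choose the witnessing subtrees coherently in $n$ so that the thresholds they demand at any fixed node stay bounded as $n\to\infty$ (otherwise $\bigcap_k T_k$ is not a tree), while simultaneously keeping $f$ a genuine element of $V$ rather than a function of the branch --- the displayed finiteness is exactly what makes a coherent choice possible, but organizing the bookkeeping (which node and level at which stage, and how much of $T_k$ to freeze) is the substance of the argument.
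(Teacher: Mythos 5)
Your right-to-left direction is correct and is essentially the paper's argument (the paper contradicts boundedness by a ground-model real directly rather than routing through Proposition~\ref{simpleUnboundedchar}, but the content is the same). The genuine gap is in the left-to-right direction, and it sits exactly where you flag it: you set up the contrapositive and the fusion correctly, but you never produce the ground-model bound, and the ``engine'' you offer is not sufficient to do so. The finiteness of $\sup\{\lambda(v^\smallfrown j,n):v^\smallfrown j\in T\}$ controls the children of a \emph{single} node $v$ for a \emph{single} $n$; but for a fixed $n$ the generic branch may pass through any of infinitely many nodes at a given level, and $\sup\{\lambda(v^\smallfrown j,n):|v|=l,\ j\in\omega\}$ has no reason to be finite. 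So after the fusion you only know that the generic filter contains a condition forcing $\dot x(n)\leq(\text{something depending on }d\upharpoonright k)$ --- a branch-dependent bound, not an $f\in V$. Saying ``the displayed finiteness is exactly what makes a coherent choice possible'' asserts the conclusion of the hard step rather than proving it.

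The missing idea is a localization of \emph{which} node matters for each $n$. Writing $v(t,n)$ for your $\lambda(t,n)$ and $T^{t,n}$ for witnessing trees, the fusion gives thresholds $c(t,n)$ with $U_{t^\smallfrown m}\subseteq T^{t,n}$ for $m\geq c(t,n)$; one then \emph{further} shrinks $U$ so that $U_t\subseteq T^{t,n}$ whenever $n\leq\max(\ran(t))$, and defines two ground-model functions $g(n)=\max\{v(t,n):|t|,\max(\ran(t))\leq n\}$ and $f(n)=\max\{v(t^\smallfrown m,n):|t|,\ran(t)\leq n,\ m<c(t,n)\}$ --- both maxima over \emph{finite} sets precisely because the entries of $t$ are capped by $n$. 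In $V[G]$, for large $n$ pick $k$ with $d(k)<n\leq d(k+1)$; the extra shrinking puts $T^{d\upharpoonright k+2,n}$ in $G$, so $x(n)\leq v(d\upharpoonright k+2,n)$, and one splits: if $T^{d\upharpoonright k+1,n}\in G$ then $x(n)\leq v(d\upharpoonright k+1,n)\leq g(n)$ (all entries of $d\upharpoonright k+1$ are below $n$); if not, then $d(k+1)<c(d\upharpoonright k+1,n)$, so $d\upharpoonright k+2$ is one of the finitely many nodes covered by $f$ and $x(n)\leq f(n)$. This crossing-point case split is what converts the branch-dependent bound into $\max(f,g)\in V$, and it is the step your sketch leaves unproved.
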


\begin{proof}
First we go from right to left. Let $z$ be a real in the ground model. Suppose for contradiction that there is some $T\Vdash_{\Dtree}(\forall n\geq N)\dot{x}(n)\leq z(n)$. By strengthening $T$ as necessary we may assume that $s=\stem(T)$ has length greater than $N$. Since $s$ favors $A$ by further strengthening $T$ if necessary we may assume that $s$ belongs to $A$. But now there is some $n\geq|s|\geq N$ so that $(\forall i)$ $s$ favors $i<\dot{x}(n)$. Take $i=z(n)$. We may extend $T$ to $T'$ with $\stem(T')$ forcing $z(n)<\dot{x}(n)$. That is a contradiction.

The left to right implication is more involved. We argue by contrapositive. Suppose there is some $s$ which does not favor $A$. Then we can find a tree $T$ with $\stem(T)=s$ for which $t\leq T$ implies $t\not\in A$. To simplify notation we will assume that $\stem(T)=\varnothing$ and $T=\omega^{<\omega}$; the simplification does little to change the argument.

Now by assumption, every $s\in\omega^{<\omega}$ fails to belong to $A$. That means there is a function $v:\omega^{<\omega}\times\omega\rightarrow\omega$, and for every $s$ and $n$ with $n\geq|s|$ some tree $T^{s,n}$ with $\stem(T^{s,n})=s$ such that $$T^{s,n}\Vdash\dot{x}(n)\leq v(s,n).$$

\medskip

\noindent\emph{Claim 1.} There exists $U\in\Dtree$ with $\stem(U)=\varnothing$ such that $$(\forall s\leq U)(\forall n\geq|s|)(\forall^{\infty}m)U_{s\smallfrown m}\subseteq T^{s,n}.$$
\begin{proof}[Proof of Claim 1.]
A fusion argument. We define a sequence of trees $\{U^l|l\in\omega\}$ such that
\begin{enumerate}
\item $\stem(U^l)=\varnothing$, $U^{l+1}\subseteq U^l$
\item $l<j$, $s\in U^l$ with $|s|\leq l$ implies $s\in U^j$
\item for all $s\in U^{l+1}$ with $|s|=l$ we have $(\forall n)(\forall^\infty m)U^{l+1}_{s\smallfrown m}\subseteq T^{s,n}$.
\end{enumerate} Then we can take $U=\cap_{l<\omega}U^l$. Start with $U^0=\omega^{<\omega}$. Supposing $U^l$ is defined, let $s\in U^l$ with $|s|=l$. For each $n\geq |s|$ there is some $i(n)$ so that $m\geq i(n)$ implies $s\smallfrown m\in T^{s,n}$. Then define $U^{l+1}_{s\smallfrown m}$ to be the intersection of $U^l_{s\smallfrown m}$ with each $T^{s,n}$ for $n,i(n)\leq m$. Then $m\geq i(n),n$ will imply $U^{l+1}_{s\smallfrown m}\subseteq T^{s,n}$.
\end{proof}

Now fix $U$ as in the claim and let $c:\omega^{<\omega}\times\omega\rightarrow\omega$ be such that $$(\forall s\leq U)(\forall n\geq|s|)(\forall m\geq c(s,n))U_{s\smallfrown m}\subseteq T^{s,n}.$$ By further extending $U$ we may assume that for every $s\in\omega^{<\omega}$, whenever $n\leq\max(\ran(s))$ we have $U_s\subseteq T^{s,n}$. 

Define $f\in\omega^\omega$ so that whenever $|s|,\ran(s)\leq n$ and $m<c(s,n)$ we have $v(s\smallfrown m,n)\leq f(n)$. Let $g\in\omega^\omega$ be such that $v(s,n)\leq g(n)$ whenever $|s|,\max(\ran(s))\leq n$. We claim $U\Vdash\dot{x}\leq^*\max(f,g)$.

We work now in an arbitrary generic extension $V[G]$ with $U\in G$. Let $d$ be the corresponding tree Hechler real. Then $G$ is exactly the set of members of $\Dtree$ through which $d$ is a branch. Let $x$ be the evaluation of $\dot{x}$ via $G$. Let $n\in\omega$ with $d(k)<n\leq d(k+1)$. For sufficiently large $k$ we have $k\leq d(k)$ so by taking $n$ sufficiently large we may assume that $k<n$. We show $x(n)\leq \max\{f(n),g(n)\}$.

\medskip

\noindent\emph{Claim 2.} $x(n)\leq v(d\upharpoonright k+2,n)$.
\begin{proof}[Proof of Claim 2.]
This is because $T^{d\upharpoonright k+2,n}$ belongs to $G$, which follows from our assumption that $U_s\subseteq T^{s,n}$ whenever $n\leq\max(\ran(s))$.
\end{proof}

Now we split into two cases. In the first case, if $T^{d\upharpoonright k+1,n}$ belongs to $G$ then $x(n)\leq v(d\upharpoonright k+1,n)\leq g(n)$. In the second case, if $T^{d\upharpoonright k+1,n}\not\in G$ then we claim that $v(d\upharpoonright k+2,n)\leq f(n)$ which by Claim 2 will give $x(n)\leq f(n)$. Since $T^{d\upharpoonright k+1,n}\not\in G$ it follows that $U_{d\upharpoonright k+2}\not\subseteq T^{d\upharpoonright k+1,n}$ (because $U_{d\upharpoonright k+2}\in G$). It follows by definition of $c$ that $d(k+1)<c(d\upharpoonright k+1,n)$. Then the definition of $f$ gives $v(d\upharpoonright k+2,n)\leq f(n)$ as required.
\end{proof}

Armed with Lemma \ref{mainTreeLemma} we can prove Theorem \ref{DtreeNotDom}.

\begin{proof}[Proof of Theorem \ref{DtreeNotDom}]
Fix $\dot{x}\in V^{\Dtree}\cap\omega^\omega$ with $\Vdash\textrm{``$\dot{x}$ is unbounded"}$. Taking $A$ as in Lemma \ref{mainTreeLemma} we know that every $s\in\omega^{<\omega}$ favors $A$. Let $\phi:A\rightarrow\omega$ satisfy $\phi(t)\geq|t|$ and $$(\forall i)t\textrm{ favors }i<\dot{x}(\phi(t)).$$ We let $d$ be a tree Hechler real over $V$ and work in $V[d]$.

Define $d'$ by \begin{displaymath}
   d'(k) = \left\{
     \begin{array}{lr}
       d(n) & \textrm{ where $n$ is least such that $k=\phi(d\upharpoonright n)$, if such an $n$ exists}\\
       d(k) & \textrm{ if no such $n$ exists}
     \end{array}
   \right.
\end{displaymath} 

\medskip

\noindent\emph{Claim 1.} $\Vdash\textrm{$d'$ is dominating}$.
\begin{proof}[Proof of Claim 1.]
For any ground model real $f$ and any $T\in\mathbb{D}$ we can extend to $T'$ with $\stem(T)=\stem(T')$ such that $\stem(T')\subseteq s$ and $s\smallfrown m\in T'$ implies $m\geq f(\phi(s))$.
\end{proof}

Now observe that if $s$ favors $\varphi$ then $(\exists^\infty m)s\smallfrown m$ favors $\varphi$. This allows us to define $z\in V\cap\omega^{\nearrow\omega}$ such that $$(\forall s\in A)(\exists^\infty m)s\smallfrown m\textrm{ favors }\dot{x}(\phi(s))\geq z(m).$$
Then, because $z$ belongs to the ground model it follows that $z\circ d'$ is dominating. Thus the theorem will be proved given the following claim.

\medskip

\noindent\emph{Claim 2.}
$\Vdash(\exists^\infty k)z(d'(k))\leq \dot{x}(k)$.

\begin{proof}[Proof of Claim 2.]
Fix $N$ and $T$. We want to find $k\geq N$ and $U\leq T$ such that $U\Vdash z(d'(k))\leq\dot{x}(k)$. Let $s=\stem(T)$. We may assume that $|s|\geq N$ and that $j\geq i\geq |s|$ implies $d(i)\leq d(j)$. Since $s$ favors $A$ we may also assume that $s\in A$. Now pick $m$ such that $s\smallfrown m\in T$ and $s\smallfrown m$ favors $\dot{x}(\phi(s))\geq z(m)$. Since $s\smallfrown m\in T$  there is some $U\leq T$ such that $\stem(U)=s\smallfrown m$ and also $$U\Vdash\dot{x}(\phi(s)))\geq z(m).$$ Now taking $l=|s|$ we have $$U\Vdash\dot{x}(\phi(d\upharpoonright l))=\dot{x}(\phi(s))\geq z(m)=z(d(l))\geq z(d'(\phi(d\upharpoonright l)).$$ And $\phi(d\upharpoonright l)\geq l\geq N$. So $k=\phi(d\upharpoonright l)$ satisfies the claim.
\end{proof}

\end{proof}

\section{Unbounded and dominating reals in the standard Hechler extension}


\subsection{Proof of Theorem 3}

Our objective in this section is to prove Theorem \ref{DomChar}. Let us note that although we have seen that $\D$ and $\Dnd$ are equivalent as forcing notions, nonetheless the direct analogue of Theorem \ref{DomChar} for $\D$ is not true. For example, suppose that $d$ is a $\D$-generic real and let $d_0\in V[d]\cap\omega^\omega$ satisfy $$(\forall n) d_0(2n)=d_0(2n+1)=\min\{d(2n),d(2n+1)\}.$$ Then $d_0$ is a dominating real but for any $z_0,z_1\in V\cap\omega^{\nearrow\omega}$ we have $z_0\circ d\circ z_1\not\leq^*d_0$.

Therefore we will exclusively be working with the poset $\Dnd$ and thus we will only be concerned with stems $s$ which are nondecreasing. For the rest of this section when we refer to finite sequences of naturals we shall always mean nondecreasing ones, even when not explicitly stated. Let $\omega^{\nearrow<\omega}$ be the collection of such sequences, and let $\omega^{\nearrow m}$ be the collection of nondecreasing sequences of naturals of length $m$.

To motivate we start with the following simple proposition about dominating reals in $V^\Dnd$.

\begin{prop}\label{DomFact}
Let $\dot{y}\in V^\mathbb{\Dnd}\cap\omega^\omega$ and let $A=\{t|(\forall^\infty n)(\forall i)\textrm{$t$ forces $i\leq\dot{y}(n)$}\}$. Then $$\Vdash_{\mathbb{\Dnd}}``\dot{y}\textrm{ is dominating }"\Longrightarrow \textrm{ every $s$ favors $A$}.$$
\end{prop}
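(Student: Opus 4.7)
The plan is to prove the contrapositive. Suppose some stem $s_0 \in \omega^{\nearrow<\omega}$ does not favor $A$, witnessed by a commitment $f_0$ such that no $t \le \langle s_0, f_0\rangle$ lies in $A$. I will construct a ground model real $z$ and argue that $\langle s_0, f_0\rangle \Vdash \neg(z \le^* \dot y)$, contradicting the assumption that $\dot y$ is forced to be dominating.

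Unpacking the definition of $A$, for each $t \le \langle s_0, f_0\rangle$ and each $N$ there exist $n \ge N$ and $i \in \omega$ such that $t$ does not force $i \le \dot y(n)$. Using the stem-based forcing terminology from Section 2, this is equivalent to saying that for every commitment $g$, the condition $\langle t,g\rangle$ admits a further extension forcing $\dot y(n) < i$. The collection $\{t : t \le \langle s_0, f_0\rangle\}$ is countable, so I enumerate it with infinite repetition as $(t_{k_m})_{m \in \omega}$, in such a way that every $t \le \langle s_0, f_0\rangle$ appears as $t_{k_m}$ for infinitely many $m$. By recursion on $m$, I pick $n_m > m$, distinct from the previously chosen $n_0, \dots, n_{m-1}$, together with a natural number $i_m$ witnessing that $t_{k_m}$ does not force $i_m \le \dot y(n_m)$; this is possible because $t_{k_m} \notin A$ supplies infinitely many admissible values of $n$. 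Then I define $z \in V \cap \omega^\omega$ by $z(n_m) = i_m$ for every $m$ and $z(n) = 0$ otherwise.

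To verify $\langle s_0, f_0\rangle \Vdash (\exists^\infty n)\, \dot y(n) < z(n)$, fix any extension $\langle t,g\rangle \le \langle s_0, f_0\rangle$ and any $N \in \omega$. Since $t$ occurs as $t_{k_m}$ for infinitely many $m$, I can select such an $m$ with $n_m > N$. By construction $t_{k_m}$ does not force $i_m \le \dot y(n_m)$, so in particular $\langle t,g\rangle \not\Vdash i_m \le \dot y(n_m)$, which means $\langle t,g\rangle$ has an extension forcing $\dot y(n_m) < i_m = z(n_m)$. This density argument shows that $\langle s_0, f_0\rangle$ forces $\dot y$ to fail to eventually dominate the ground model real $z$, contradicting the dominating hypothesis.

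The only real obstacle is the bookkeeping: a naive enumeration of stems yields only a single witness $n$ per stem, whereas the $\exists^\infty$ conclusion requires that every individual extension $\langle t,g\rangle$ admit arbitrarily late witnesses. Enumerating the countable set of candidate stems with infinite repetition (a standard dovetailing device) resolves this cleanly, and the remainder of the argument is a direct unfolding of the definitions of ``forces'' and ``favors'' on stems introduced in Section 2.
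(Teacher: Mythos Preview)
Your proof is correct and follows essentially the same contrapositive route as the paper: fix a witnessing commitment $f_0$ so that every $t\leq\langle s_0,f_0\rangle$ lies outside $A$, then build a single ground-model $z$ so that each such $t$ favors $\dot y(n)<z(n)$ for infinitely many $n$, yielding $\langle s_0,f_0\rangle\Vdash(\exists^\infty n)\,\dot y(n)<z(n)$. The paper simply asserts that such a $z$ can be defined, whereas you spell out the dovetailing explicitly; the arguments are otherwise identical.
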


\begin{proof}
Argue by contrapositive; if some $s$ does not favor $A$ then we can find some $f\in\omega^\omega$ such that $t\leq\langle s,f\rangle$ implies $t\not\in A$. For each such $t$ we have $(\exists^\infty n)(\exists i)t$ favors $\dot{y}(n)<i$. This allows us to define a function $z\in\omega^\omega$ so that for each $t\not\in A$ we have $(\exists^\infty n)\textrm{$t$ favors }\dot{y}(n)<z(n)$. Thus $$\langle s,f\rangle\Vdash(\exists^\infty n)\dot{y}(n)<z(n).$$
\end{proof}

For the rest of this section we let $\dot{y}\in V^{\Dnd}\cap\omega^\omega$ and take $A$ to be defined as in Proposition \ref{DomFact}. Let $\phi:A\rightarrow\omega$ be defined so that that $\phi(s)$ equals the least $N$ such that $$(\forall n\geq N)(\forall i) s\textrm{ forces }i\leq\dot{y}(n).$$ We extend $\phi$ to a function $\phi:\omega^{\nearrow<\omega}\rightarrow\omega\cup\{\infty\}$ by letting $\phi(s)=\infty$ when no such $N$ exists.

Our strategy for characterizing when $\dot{y}$ is a dominating real is to analyze the growth of the function $\phi$. Supposing for example that $\dot{y}$ were of the form $z_0\circ\dot{d}\circ z_1$ for some $z_0,z_1\in \omega^{\nearrow\omega}$, it is not hard to see we would have that $\phi(s)$ is a function of the \emph{length} of $s$. It turns out that this is essentially an exact characterization of the dominating reals.

\begin{defn}
Fix $q\in\mathbb{D}$. We say that $\phi$ is \emph{length bounded below $q$} if there is some function $\psi\in\omega^\omega$ so that whenever $s\leq q$ we have $\phi(s)\leq\psi(|s|)$.
\end{defn}

We are now ready to give several characterizations of the dominating reals in $V^\Dnd$. Let $B\subseteq\omega^{\nearrow<\omega}$ be the collection $$\{s|(\exists m)(\exists\{t_l:l\in\omega\}\subseteq\omega^{\nearrow m})\lim_{l<\omega}t_l(0)=\infty\textrm{ and }\lim_{l<\omega}\phi(s\smallfrown t_l)=\infty\}.$$ The definition of $B$ is motivated in part by the Baumgartner-Dordal rank analysis of $\Dnd$. For someone hoping that $\phi$ is everywhere length bounded $B$ is a bad set and in order for $\dot{y}$ to be a dominating real we must mostly be able to avoid it. 

\begin{lemma} \label{MainLemma}
The following are equivalent:
\begin{enumerate}
\item $\Vdash``\dot{y}\textrm{ is dominating"}$
\item $(\forall p)(\exists q\leq p)(\forall t\leq q)t\not\in B$.
\item $(\forall p)(\exists q\leq p)\textrm{ $\phi$ is length bounded below $q$}.$
\item $(\forall p)(\exists q\leq p)(\exists z_0,z_1\in\omega^{\nearrow\omega})q\Vdash z_0\circ\dot{d}\circ z_1\leq^*\dot{y}$.
\end{enumerate}
\end{lemma}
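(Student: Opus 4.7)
The plan is to prove the cycle $(4) \Rightarrow (1) \Rightarrow (2) \Rightarrow (3) \Rightarrow (4)$, with the immediate observation $(3) \Rightarrow (2)$ serving as a sanity check: if $\phi(s) \leq \psi(|s|)$ for $s \leq q$ and some $t \leq q$ were in $B$ with witnesses $m, \{t_l\}$, then $\phi(t \smallfrown t_l) \leq \psi(|t|+m)$ would be bounded, contradicting $\phi(t \smallfrown t_l) \to \infty$. The direction $(4) \Rightarrow (1)$ is likewise straightforward: densely below each condition some $q$ forces $\dot y \geq^* z_0 \circ \dot d \circ z_1$, and this lower bound is dominating because $\dot d$ is and $z_0, z_1 \in V \cap \omega^{\nearrow\omega}$ tend monotonically to infinity.

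For $(2) \Rightarrow (3)$ I argue by contrapositive. Suppose $\phi$ is not length bounded below $q$, so $\sup\{\phi(s): s\leq q,\ |s|=n\}=\infty$ for some $n$; write the relevant stems as $s_q \smallfrown \tilde s$ with $|\tilde s| = m:=n-|s_q|$. I pigeonhole on successive entries $\tilde s(0), \tilde s(1), \ldots$: at each coordinate either the entry can be chosen unbounded along a subsequence with $\phi$ going to infinity, which witnesses that the current base stem is in $B$ (contradicting $(2)$), or some value is hit cofinally and I extend the base stem by it and recurse. After at most $m$ steps either a $B$-stem below $q$ has appeared or we are left with a single stem $s^*$ of length $n$ with $\phi(s^*) = \infty$. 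In the latter case I analyze $v \mapsto \phi(s^* \smallfrown v)$. Since $\phi$ is monotone under extension, $s \in B$ is equivalent to the $m=1$ case $\limsup_{v \to \infty}\phi(s \smallfrown v) = \infty$; if this holds for $s^*$ we contradict $(2)$. Otherwise $\phi(s^* \smallfrown v) \leq K$ for all $v \geq v_0$, so for every $n' \geq K$ and every $i$ the stem $s^* \smallfrown v$ forces $\dot y(n') \geq i$ via some commitment $f_v^{n',i}$. A density argument below $(s^*, f)$ with $f(|s^*|) := v_0$ (strengthening each extension using the relevant $f_v^{n',i}$) then shows $(s^*, f) \Vdash \dot y(n') \geq i$ for every $n' \geq K$ and every $i$, which is impossible since $\dot y$ names a real.

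For $(3) \Rightarrow (4)$ assume $\phi(s) \leq \psi(|s|)$ for $s \leq q$ with $\psi$ monotone. Choose $z_1 \in V \cap \omega^{\nearrow\omega}$ with $\psi(z_1(k)) \leq k$ for all large $k$. Any admissible stem $s$ of length $z_1(k)$ then forces $\dot y(k) \geq i$ for each $i$ via some commitment. I build $z_0 \in \omega^{\nearrow\omega}$ and a single commitment $f' \geq f_q$ stage by stage: at stage $k$, for each terminal value $v$ attainable by $s(z_1(k)-1)$ for an $s$ admissible under the commitment built so far, only finitely many nondecreasing $s$ of length $z_1(k)$ with terminal entry $v$ are admissible (since all their entries lie between the current commitment and $v$), so I take the pointwise maximum of their witnessing commitments to update $f'$ and declare $z_0(v)$. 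Making $f'(z_1(k)-1)$ grow quickly ensures each $v$ is handled at only one stage, so $f'$ is well-defined; the resulting $(s_q, f') \leq q$ forces $\dot y(k) \geq z_0(\dot d(z_1(k)-1))$ for all large $k$, yielding $(4)$.

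The crux is $(1) \Rightarrow (2)$, which I prove by contrapositive: if $B$-stems are dense below $p$, then $\dot y$ is not dominating below $p$. Given $r \leq p$ and $N$, hereditary $B$-density provides $s \leq r$ with $s \in B$ and, via the $m=1$ reformulation, some $v \geq f_r(|s|)$ with $\phi(s \smallfrown v) \geq N+1$, which unpacks to $n \geq N$ and $i$ with $s \smallfrown v$ favoring $\dot y(n) < i$; we can then extend $r$ to force $\dot y(n) < i$. The challenge is to package these local witnesses into a single ground-model $z$ and a condition $p' \leq p$ with $p' \Vdash (\exists^\infty n)\, \dot y(n) < z(n)$, whence $\dot y$ would fail to be dominating below $p'$. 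My plan is to take $p' = (s_p, f')$ for a suitably fast-growing $f'$ and define $z(n)$ as a supremum of the values $i$ obtainable as the least integer such that $s'$ does not force $\dot y(n) \geq i$, with $s'$ ranging over stems extending $s_p$ of controlled length compatible with $p'$. The main technical obstacle is guaranteeing that this supremum is finite at each $n$ so that $z \in V \cap \omega^\omega$; I expect the Baumgartner--Dordal rank analysis for $\Dnd$ to supply the needed bound, since its countable ordinal ranks (bounded by ccc) control the admissible $i$-values at each $n$ among stems of uniformly bounded length compatible with $p'$.
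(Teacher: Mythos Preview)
Your overall cycle matches the paper's and $(4)\Rightarrow(1)$ is fine, but the other three implications have real gaps.

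The recurring problem is the claim that $\phi$ is monotone under extension, which is simply false: if $s$ forces $\varphi$ via some commitment $f$, an extension $t\supseteq s$ need not force $\varphi$ unless $t\leq\langle s,f\rangle$. (For a concrete failure take $\dot y(n)=\dot d(n+1)$; then $\phi(s)=\max(0,|s|-1)$ strictly increases along extensions.) So your ``$m=1$ reformulation'' of $B$ is unjustified. This breaks your $(2)\Rightarrow(3)$ endgame: even granting that you reach a single $s^*\leq q$ with $\phi(s^*)=\infty$ and $\phi(s^*\smallfrown v)\leq K$ for large $v$, the density argument does not go through, because an arbitrary $\langle t,h\rangle\leq\langle s^*,f\rangle$ with $t\supsetneq s^*\smallfrown v$ need not be compatible with the witnessing condition $\langle s^*\smallfrown v,f_v^{n',i}\rangle$. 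The paper does not try to show $\phi$ is length bounded below $q$ itself; instead it \emph{extends} $q$ to some $r=\langle s,\max(f,f')\rangle$, building for each $m$ a finite tree $S_m$ of stems (branching only below the local thresholds $L_t$ coming from $t\notin B$) and setting $f'(|s|+m)=\max_{t\in S_m}L_t$, $\psi(|s|+m+1)=\max_{t\in S_m}N_t$. The extension is essential.

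For $(1)\Rightarrow(2)$ you are working much too hard; no rank analysis is needed. The set $B$ is countable, so fix for each $s\in B$ one witnessing sequence $\{t^s_l\}$ and diagonalize: since $\phi(s\smallfrown t^s_l)\to\infty$, for every $s\in B$ and every $N$ there exist $n,l>N$ and $i$ with $s\smallfrown t^s_l$ favoring $\dot y(n)<i$, and a single $z\in\omega^\omega$ absorbs all these $i$'s (at stage $n$ handle only the finitely many pairs with index and threshold $\leq n$). Then the original $p$ already forces $(\exists^\infty n)\,\dot y(n)<z(n)$, since below any $q\leq p$ one finds $s\in B$, then $s\smallfrown t^s_l\leq q$ with $l,n$ large, and extends to force $\dot y(n)<z(n)$. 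Finally, your $(3)\Rightarrow(4)$ sketch never specifies $z_0(v)$; the paper's argument is different, building finite sets $S_n(j)$ and functions $g_{n,k}(j)=\max\{f^t_{n,j}(k):t\in S_n(j)\}$, then extracting $z_0$ from the countable family $\{g_{n,k}\}$ via a separate diagonalization proposition.
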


Notice that (1) implies (4) gives Theorem \ref{DomChar}.

\begin{proof}

That (4) implies (1) is clear.

We show (1) implies (2). For each $s\in B$ fix a witnessing sequence $\{t_l^s:l\in\omega\}$. Then we may define a function $z\in\omega^\omega$ such that $$(\forall s\in B)(\forall N)(\exists n,l>N)s\smallfrown t_l^s\textrm{ favors }\dot{y}(n)<z(n).$$ Suppose now that (2) failed and there was some $p$ so that $(\forall q\leq p)(\exists s\leq q)s\in B$. We claim that $$p\Vdash(\exists^\infty n)\dot{y}(n)< z(n).$$

If not then there is some $q\leq p$ with $q\Vdash(\forall n\geq N_0)z(n)\leq\dot{y}(n)$. Write $q=\langle t,f\rangle$. There is $s\in B$ with $s\leq q$. Since $s\in B$ we may take $l,n\in\omega$ so that $s\smallfrown t_l^s\textrm{ favors }\dot{y}(n)<z(n)$ and $l,n$ are large enough that $n\geq N_0$, $s\smallfrown t_l^s\leq q$. Since $s\smallfrown t_l^s$ favors $\dot{y}(n)<z(n)$ we may further extend $q$ to force $\dot{y}(n)<z(n)$, a contradiction.

Next we show that (2) implies (3). Fix a condition $p\in\Dnd$. Taking $q\leq p$ as given by (2), write $q=\langle s,f\rangle$. We will define an $r\leq q$ so that $\phi$ is length bounded below $r$. In particular we construct functions $\psi$, $f'\in\omega^\omega$ such that $s\smallfrown t\leq\langle s,\max\{f,f'\}\rangle$ implies $\phi(s\smallfrown t)\leq\psi(|s\smallfrown t|)$. Start by setting $\psi(|s|)$ equal to $\phi(s)$. Before we proceed further let us note that when $t\not\in B$ it follows that for every $m$ there is some $N,L$ so that if $t\in\omega^{\nearrow m}$ with $t(0)\geq L$ then $\phi(s\smallfrown t)\leq N$.

Fix $m\in\omega$. We define $\psi(|s|+m+1)$, $f'(|s|+m)$. To do so we recursively define a finite set $S_m\subseteq\omega^{\nearrow\leq m}$, and we simultaneously define $L_t,N_t\in\omega$ for each $t\in S_m$. We will make sure that $t\in S_m$ implies $s\smallfrown t\not\in B$. Start by placing $\varnothing\in S_m$. Now suppose that $t\in S_m$. Since $s\smallfrown t\not\in B$ there is $L_t,N_t\in\omega$ such that whenever $u\in\omega^{\nearrow<\omega}$ with $|u|=m+1-|t|$ and $u(0)\geq L_t$, $\phi(s\smallfrown t\smallfrown u)\leq N_t$. If $|t|<m$ put $s\smallfrown t\smallfrown i\in S_m$ whenever $i<L_t$ and $s\smallfrown t\smallfrown i\not\in B$. That completes our definition of $S_m$. Let $\psi(|s|+m+1)=\max_{t\in S_m}N_t$ and $f'(|s|+m)=\max_{t\in S_m}L_t$.

Let us check that this works. Suppose $s\smallfrown t\leq\langle s,\max\{f,f'\}\rangle$, and say $|t|=m+1$. Notice that $t\upharpoonright0=\varnothing\in S_m$. Take $k$ as large as possible with $t\upharpoonright k\in S_m$. First suppose $k<m$. Then since $s\smallfrown t\upharpoonright k+1\not\in B$ by definition of $S_m$ we must have $t(k)\geq L_{t\upharpoonright k}$ which implies that $\phi(s\smallfrown t\upharpoonright k\smallfrown t\upharpoonright[k+1,m])\leq N_t\leq\psi(|s|+m+1)$. Now suppose $k=m$. Since $t(m)\geq f'(|s|+m)\geq L_{t\upharpoonright k}$ we have $$\phi(s\smallfrown t\upharpoonright k\smallfrown t(m))\leq N_{t\upharpoonright k}\leq\psi(|s|+m+1)$$ as needed.

Finally we show that (3) implies (4). Fix $p\in\Dnd$ and let $q\leq p$ with $\phi$ length bounded below $q$. Let $\psi\in\omega^\omega$ witness the bound. We may assume without loss of generality that $\psi$ is a strictly increasing function. Whenever $t\leq q$ and $n\geq\psi(|t|)$ we have for every $i\in\omega$ some commitment $f^t_{n,i}$ such that $$\langle t,f^t_{n,i}\rangle\Vdash i\leq\dot{y}(n).$$

Now say $q=\langle s,f\rangle$. Our goal is to construct $z_0,z_1$ and $h$ so that
$$(*)\textrm{ }\langle s,\max\{f,h\}\rangle\Vdash(\forall^\infty n) z_0(\dot{d}(z_1(n)))\leq\dot{y}(n).$$ We let $z_1\in\omega^\omega$ be defined by having $z_1(n)=l$ whenever $\psi(l)\leq n<\psi(l+1)$. To define $h$ and $z_0$ we will make use of the following simple proposition whose proof we leave to the reader.

\begin{prop} \label{mainProp}
Let $\mathcal{G}$ be a countable subset of $\omega^\omega$. Then there is a $z\in\omega^{\nearrow\omega}$ so that for all $g\in\mathcal{G}$ we have $$(\forall^\infty m)g(z(m))\leq m.$$
\end{prop}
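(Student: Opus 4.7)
The plan is to enumerate $\mathcal{G}=\{g_k:k\in\omega\}$ and then build $z$ by prescribing the finitely many levels at which it takes each value. Concretely, define an auxiliary sequence
$$M_n=n+\max\{g_k(j):k\leq n,\,j\leq n\},$$
which is strictly increasing in $n$. Then let $z(m)$ be the unique $n$ with $M_n\leq m<M_{n+1}$ (with $z(m)=0$ when $m<M_0$). Because $M_n$ is strictly increasing, the preimage $z^{-1}(n)$ is a nonempty finite interval for every $n$, so $z$ is nondecreasing and limits to infinity; that is, $z\in\omega^{\nearrow\omega}$.

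To verify the domination property, fix $g_k\in\mathcal{G}$. For any $m$ large enough that $z(m)\geq k$, writing $n=z(m)$ we have $k\leq n$ and $j=n\leq n$, so $g_k(n)\leq M_n$ by the definition of $M_n$; and $M_n\leq m$ because $z(m)=n$. Hence $g_k(z(m))\leq m$. Since $z$ tends to infinity, the inequality $z(m)\geq k$ holds for all but finitely many $m$, which gives the required $(\forall^\infty m)g_k(z(m))\leq m$.

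There is no real obstacle here: the only ingredient is that one can diagonalize against countably many functions by thickening each level set of $z$ enough to absorb the values $g_k(n)$ for $k\leq n$, and the quantifier ``$k\leq n$'' in the definition of $M_n$ is exactly what allows a single $z$ to work uniformly for all members of $\mathcal{G}$. One could equivalently phrase the construction by choosing $z(m)$ greedily as the largest $n$ with $\max_{k\leq n}g_k(n)\leq m$, but the explicit definition of $M_n$ above makes the verification essentially a one-line check and is the version I would write up.
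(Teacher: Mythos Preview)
Your argument is correct. The paper omits the proof of this proposition entirely (``whose proof we leave to the reader''), so there is nothing to compare against; your diagonalization via the strictly increasing thresholds $M_n$ is exactly the kind of elementary construction the author had in mind, with the only nitpick being that for $k=0$ the clause ``$z(m)\geq k$'' does not by itself guarantee $M_0\leq m$, but those finitely many $m<M_0$ are harmless for the $\forall^\infty$ conclusion.
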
 


Fix $n,j\in\omega$ with $|s|\leq l$ where $l=z_1(n)$. We define a finite set $S_{n}(j)\subseteq\omega^{\nearrow\leq l}$ by recursion. We will guarantee that $t\in S_{n}(j)$ implies $t\leq q$. In particular $f^t_{n,j}$ will be defined for $t\in S_{n}(j)$. Start by putting $s$ in $S_{n}(j)$. Then, whenever $t\in S_{n}(j)$ place $u$ in $S_{n}(j)$ if $u\leq q$, $t\subseteq u$, $|u|\leq l$ and $u(|u|-1)<f_{n,j}^t(|u|-1)$. Since we have restricted our attention to nondecreasing sequences there are only finitely many options for $u$. Now define $g_{n,k}$ by $$g_{n,k}(j)=\max\{f^t_{n,j}(k):t\in S_{n}(j)\}.$$ Let $\mathcal{G}$ be the collection $$\{g_{n,k}:z_1(n)\leq k\}.$$ Apply Proposition \ref{mainProp} to $\mathcal{G}$ to obtain $z_0$. By the defining property of $z_0$ for each $k$ the set $$X_{n,k}=\{m:(\exists t\in S_n(z_0(m)) m<f_{n,z_0(m)}^t(k)\}$$ is finite. Let $h\in\omega^\omega$ with $f_{n,z_0(m)}^t(k)\leq h(k)$ whenever $m\in X_k$, $z_1(n)\leq k$ and $t\in S_n(z_0(m))$. Then $h$ satisfies $$(\dagger)\hspace{.1cm}(\forall t\in S_{n}(z_0(m))) m<f^t_{n,z_0(m)}(k) \Rightarrow f^t_{n,z_0(m)}(k)\leq h(k)$$ whenever $z_1(n)\leq k$.

We complete the proof by checking that $(*)$ holds. Let $d$ be a $\Dnd$-generic real so that $\langle s,\max\{f,h\}\rangle$ belongs to the corresponding generic filter $G$. Fix $n\geq\psi(|s|)$ and let $l=z_1(n)$. 

\medskip

\noindent\emph{Claim 1.} For $k\geq l,|s|$ and $t\in S_{n}(z_0(d(l)))$ we have $f^t_{n,z_0(d(l))}(k)\leq d(k)$. 

\begin{proof}[Proof of Claim 1.]
We split into two cases. In the first case if $f^t_{n,z_0(d(l))}(k)\leq d(l)$ then we are done since $l\leq k$ and $d$is  nondecreasing. In the second case $d(l)<f^t_{n,z_0(d(l))}(k)$. But then by $(\dagger)$ we have $$f^t_{n,z_0(d(l))}(k)\leq h(k)\leq d(k).$$ We have $h(k)\leq d(k)$ since $|s|\leq k$ and $\langle s,h\rangle$ belongs to $G$.
\end{proof}

Now take $l_0\leq l$ to be as large as possible so that $d\upharpoonright l_0$ belongs to $S_{n}(z_0(d(l))$.

\medskip

\noindent\emph{Claim 2.} For $k\geq l_0$ we have $f^{d\upharpoonright l_0}_{n,z_0(d(l))}(k)\leq d(k)$.

\begin{proof}[Proof of Claim 2.]
If not there is some violating $k\geq l_0$. By Claim 1 we know $k<l$. We have that $q$ belongs to $G$ and so $d\upharpoonright k+1\leq q$. We also have $d(k)<f^{d\upharpoonright l_0}_{n,z_0(d(l))}(k)$. Thus by the definition of $S_{n}(z_0(d(l)))$ we find that $d\upharpoonright k+1\in S_{n}(z_0(d(l)))$ which is contrary to the maximality of $l_0$.
\end{proof}

By Claim 2 (and the fact that $z_1(n)=l$) we have $$\langle d\upharpoonright l_0,f^{d\upharpoonright l_0}_{n,z_0(d(z_1(n)))}\rangle\in G.$$ Since this condition forces that $z_0(d(z_1(n)))\leq\dot{y}(n)$ we are done.
\end{proof}

\subsection{Proof of Theorem 2}

Using Theorem \ref{DomChar} we can now prove Theorem \ref{DDom}.

\begin{proof}
Let $d$ be a $\Dnd$-generic real. Our goal is to to produce an unbounded real $x$ in $V[d]$ which is eventually dominated by every dominating real. Fix $n\in\omega$. Let $k$ be least with $d(k)\geq n$. Then we set $x(n)=i$ where $i$ is large as possible so that $$(\forall j\in[k,k+i])d(k)=d(j).$$ An easy density argument shows that $x$ is indeed unbounded. To show that $x$ is eventually dominated by every dominating real, it is enough by Theorem \ref{DomChar} to show that $x\leq^* z_0\circ d\circ z_1$ for every $z_0,z_1\in V\cap\omega^{\nearrow\omega}$.

Fix such $z_0$ and $z_1$ and let $f\in V\cap\omega^\omega$ satisfy $$(1)\textrm{ }(\forall n) n<f(z_0(n)) \textrm{ and (2) }(\forall n)n<f(z_1(n)).$$ 

We claim then that for any $s\in\omega^{\nearrow<\omega}$ we have $$\langle s,f\rangle\Vdash(\forall^\infty n)x(n)<z_0(d(z_1(n)))$$ which will complete the proof.

Assume instead that $\langle s,f\rangle$ belongs to the generic filter $G$ corresponding to $d$ and yet $z_0(d(z_1(n)))\leq x(n)$ holds for infinitely many $n$. We know $$(\forall n\geq |s|)f(n)\leq d(n).$$ We also know that $z_0(d(z_1(n)))$ is dominating and thus for sufficiently large $n$ we have $z_1(n)\leq z_0(d(z_1(n)))$. Fix an $n$ with $|s|\leq n$, $|s|,z_1(n)\leq z_0(d(z_1(n)))$ and $z_0(d(z_1(n)))\leq x(n)$. Let $k$ be least with $n\leq d(k)$. By (2) $n\leq d(z_1(n))$ and thus $k\leq z_1(n)$. By assumption $x(n)$ is larger than $z_0(d(z_1(n)))$ and by the definition of $x$ we have that $d$ is fixed on the interval $[k,k+x(n)]$ and therefore $$d(k)=d(z_1(n))=d(z_0(d(z_1(n))))=d(x(n)).$$ 

But applying (1) with $d(z_1(n))$ in place of $n$ we also get $$d(z_1(n))<f(z_0(d(z_1(n))))\leq d(z_0(d(z_1(n))))$$ which brings us to a contradiction.
\end{proof}

\subsection{Consequences}

In this subsection we mention some consequences of the other work from this section. Let $d$ be a $\Dnd$-generic real, and let $\mathcal{D}$ be the collection of dominating reals in $V[d]$. 

\begin{cor} \label{IsoStr}
The structures $(V\cap\omega^\omega,\leq^*)$ and $(\mathcal{D},{}^*\hspace{-.13cm}\geq)$ are cofinally isomorphic.
\end{cor}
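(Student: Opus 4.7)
The plan is to construct an anti-monotone bijection (modulo $=^*$) between cofinal subsets of the two posets. Since the order on $(\mathcal{D},{}^*\hspace{-.13cm}\geq)$ is the reverse of eventual dominance, an anti-monotone map is precisely an order isomorphism in the sense required. Let $X\subseteq V\cap\omega^\omega$ denote the collection of strictly increasing ground-model functions; this is cofinal in $(V\cap\omega^\omega,\leq^*)$ since for any $g\in V\cap\omega^\omega$ the function $g^*(n)=\max(g(0),\dots,g(n))+n+1$ lies in $X$ and eventually dominates $g$. For $f\in X$ write $f^{-1}(m)=\min\{k:f(k)\geq m\}$ for its pseudo-inverse, and define $\Psi:X\to V[d]\cap\omega^\omega$ by $\Psi(f)=f^{-1}\circ d\circ f^{-1}$. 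Intuitively, fast-growing ground-model $f$ produce ``barely dominating'' reals $\Psi(f)$ in the extension.

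First I would check that $\Psi(f)\in\mathcal{D}$. For any ground-model $g$, WLOG nondecreasing, the inequality $\Psi(f)(n)\geq g(n)$ unpacks via $f^{-1}\circ f=\mathrm{id}$ to $d(f^{-1}(n))\geq f(g(n))$, which in turn follows from $d(k)\geq f(g(f(k)))$; the latter holds eventually because $d$ dominates the ground-model real $k\mapsto f(g(f(k)))$. Next I would observe anti-monotonicity: if $f_1\leq^* f_2$ then $f_1^{-1}\geq^* f_2^{-1}$, and composing with the nondecreasing $d$ preserves this inequality on both sides, so $\Psi(f_1)\geq^*\Psi(f_2)$.

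For cofinality of $\Psi(X)$ in $(\mathcal{D},{}^*\hspace{-.13cm}\geq)$ I would appeal to Theorem \ref{DomChar}. Given $y\in\mathcal{D}$, pick $z_0,z_1\in V\cap\omega^{\nearrow\omega}$ with $z_0\circ d\circ z_1\leq^* y$, and choose $f\in X$ growing fast enough that $f^{-1}(m)\leq\min(z_0(m),z_1(m))$ for all large $m$ (for instance, $f(k)=\max\{m:\min(z_0(m),z_1(m))\leq k\}+k+1$, which is well-defined since $\min(z_0,z_1)\to\infty$). Then by monotonicity in each slot, $\Psi(f)(n)=f^{-1}(d(f^{-1}(n)))\leq f^{-1}(d(z_1(n)))\leq z_0(d(z_1(n)))$ for large $n$, so $\Psi(f)\leq^* z_0\circ d\circ z_1\leq^* y$, which is the desired cofinality.

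The main obstacle is showing $\Psi$ reflects the order: if $f_1\not\leq^* f_2$ then $\Psi(f_1)\not\geq^*\Psi(f_2)$. The plan is a density argument of the kind used in the proofs of Lemma \ref{MainLemma} and Proposition \ref{DomFact}. When $f_1\not\leq^* f_2$ the ground-model set $A=\{m:f_1^{-1}(m)<f_2^{-1}(m)\}$ is infinite (the inequality $f_1(n)>f_2(n)$ at $n$ forces $m:=f_1(n)$ into $A$). A standard density calculation for $\Dnd$ shows that for any $V$-infinite $A\subseteq\omega$ the set $\ran(d)\cap A$ is infinite generically: given any condition $\langle s,h\rangle$ one may extend $s$ by a value in $A$ that exceeds $h(|s|)$ and the last entry of $s$. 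Tracking this through the nested compositions $f^{-1}\circ d\circ f^{-1}$ produces infinitely many $n$ with $\Psi(f_1)(n)<\Psi(f_2)(n)$, contradicting $\Psi(f_1)\geq^*\Psi(f_2)$. Assembling the four ingredients, $\Psi$ descends to an order isomorphism between the cofinal subset $X/{=^*}$ of $(V\cap\omega^\omega,\leq^*)/{=^*}$ and its $\Psi$-image in $(\mathcal{D},{}^*\hspace{-.13cm}\geq)/{=^*}$, yielding the cofinal isomorphism claimed.
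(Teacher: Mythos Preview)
Your approach is essentially the paper's: your pseudo-inverse $f\mapsto f^{-1}$ is exactly the order-reversing cofinal map $z\mapsto z'$ from $\omega^\omega$ to $\omega^{\nearrow\omega}$ that the paper invokes without writing down, and your $\Psi(f)=f^{-1}\circ d\circ f^{-1}$ is the paper's composite $z\mapsto z'\mapsto z'\circ d\circ z'$. The appeals to Theorem~\ref{DomChar} for cofinality and to genericity of $d$ for order reflection are likewise the same.

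There is one imprecision worth flagging in the reflection step. The density fact you prove---that $\ran(d)\cap A$ is infinite---is correct but is not by itself what you need; ``tracking this through'' the two nested compositions does not follow, because the inner arguments $f_1^{-1}(n)$ and $f_2^{-1}(n)$ differ. What you actually want is: for infinitely many $n\in A$, also $d(f_1^{-1}(n))\in A$. Indeed, if $n\in A$ then $f_1^{-1}(n)<f_2^{-1}(n)$, hence $d(f_1^{-1}(n))\leq d(f_2^{-1}(n))$ and $f_2^{-1}(d(f_1^{-1}(n)))\leq f_2^{-1}(d(f_2^{-1}(n)))$; so it suffices that $d(f_1^{-1}(n))\in A$ to get $\Psi(f_1)(n)<\Psi(f_2)(n)$. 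The right density argument is therefore: given $\langle s,h\rangle$ and $N$, pick $n\in A$ with $n\geq N$ and $|s|\leq f_1^{-1}(n)$, pick $m\in A$ exceeding $s(|s|-1)$ and $\max\{h(j):|s|\leq j\leq f_2^{-1}(n)\}$, and extend $s$ to be constantly $m$ on $[|s|,f_2^{-1}(n)]$. This forces $d(f_1^{-1}(n))=m\in A$, as required. (The paper, for its part, compresses this entire step to the phrase ``uses the genericity of $d$''.)
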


\begin{proof}
From Theorem \ref{DomChar} we have that the set $\{z\circ d\circ z:z\in V\cap\omega^{\nearrow\omega}\}$ is cofinal in $(\mathcal{D},{}^*\hspace{-.13cm}\geq)$.  In $V$ there is a cofinal mapping $z\mapsto z'$ from $\omega^\omega$ to $\omega^{\nearrow\omega}$ such that $$z_0\leq^*z_1\Leftrightarrow z_0' {}^*\hspace{-.13cm}\geq z_1'.$$ For $z_0,z_1\in V\cap\omega^{\nearrow\omega}$ we also have $$z_0\leq^*z_1\Leftrightarrow z_0\circ d\circ z_0\leq^*z_1\circ d\circ z_1.$$ (The right to left direction uses the genericity of $d$). The corollary follows.
\end{proof}

An interesting and immediate consequence of Corollary \ref{IsoStr} is the following.

\begin{cor} \label{ItaysFav}
Let $\{d_n:n\in\omega\}\in V[d]$ be a countable collection of dominating reals. Then there is a single dominating real $d^*$ such that $d^*\leq^* d_n$ for every $n\in\omega$.
\end{cor}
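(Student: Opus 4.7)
The plan is to leverage Corollary \ref{IsoStr}, together with the $\omega$-covering property of the ccc forcing $\Dnd$, to transfer the classical $\sigma$-directedness of $(V\cap\omega^\omega,\leq^*)$ into a dual statement (countable families in $\mathcal{D}$ admit common $\leq^*$-lower bounds) in $V[d]$. The conceptual issue is that the countable family $\{d_n\}$ lives in $V[d]$ rather than $V$, so the ground-model diagonalization is not immediately available; ccc is exactly what fills this gap.

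By Theorem \ref{DomChar} (or equivalently by the cofinality statement inside Corollary \ref{IsoStr}) I would first choose for each $n$ some $z_n\in V\cap\omega^{\nearrow\omega}$ with $z_n\circ d\circ z_n\leq^* d_n$. Since $\Dnd$ is ccc, the sequence $\langle z_n:n\in\omega\rangle\in V[d]$ is contained in some countable $B\in V$ with $B\subseteq V\cap\omega^{\nearrow\omega}$. Working inside $V$, I would then produce a single $z^*\in\omega^{\nearrow\omega}$ with $z^*\leq^* w$ for every $w\in B$ by a standard staircase diagonal: enumerate $B=\{w_k:k\in\omega\}$, pick $N_k$ strictly increasing so that $w_j(N_k)\geq k$ for every $j\leq k$, and set $z^*(n)=k$ on $[N_k,N_{k+1})$.

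I would then set $d^*=z^*\circ d\circ z^*$. Since $d$ is dominating and $z^*\in\omega^{\nearrow\omega}$, $d^*$ is dominating. For each fixed $n$, the eventual inequality $z^*\leq^* z_n$, combined with the monotonicity of $d$ (we are in $\Dnd$) and the fact that $z^*$ and $z_n$ tend to infinity, propagates through both outer and inner compositions to give
\[
d^*=z^*\circ d\circ z^*\leq^* z_n\circ d\circ z_n\leq^* d_n,
\]
as required.

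The step most likely to cause trouble on a first pass is the appeal to $\omega$-covering: classical $\sigma$-directedness of the ground-model reals applies only to countable families that already belong to $V$, whereas $\{z_n\}$ lives in $V[d]$. Once the ccc of $\Dnd$ is used to replace $\{z_n\}$ by a countable superset in $V$, the rest is essentially classical diagonalization transported through the cofinal isomorphism of Corollary \ref{IsoStr}.
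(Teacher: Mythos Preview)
Your proof is correct and follows essentially the same route the paper has in mind: the paper simply states that the corollary is an immediate consequence of Corollary~\ref{IsoStr}, and what you have written is precisely the unpacking of that claim. You correctly identify and make explicit the one point the paper glosses over, namely that $\sigma$-directedness of $(V\cap\omega^\omega,\leq^*)$ must be verified \emph{in $V[d]$} (not just in $V$), which is exactly where the ccc $\omega$-covering argument is needed.
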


In the terminology of Laflamme \cite{La} Corollary \ref{ItaysFav} says that $V\cap\omega^\omega$ has uncountable upperbound. In the cited paper Laflamme makes the following definitions. Let $\mathcal{F}\subseteq\omega^\omega$ be a bounded family of functions. Then $\mathcal{F}^\downarrow\subseteq\omega^\omega$ is the set of functions dominating $\mathcal{F}$. (So if $\mathcal{F}=V\cap\omega^\omega$ then $\mathcal{F}^\downarrow=\mathcal{D}$.)
$$\mathfrak{b}(\mathcal{F})=\min\{\mathcal{|H|}:\mathcal{H}\subseteq\mathcal{F}\textrm{ is unbounded in }\mathcal{F}\}$$
$$\mathfrak{d}(\mathcal{F})=\min\{\mathcal{|H|}:\mathcal{H}\subseteq\mathcal{F}\textrm{ is dominating in }\mathcal{F}\}$$
$$\mathfrak{b}^\downarrow(\mathcal{F})=\min\{\mathcal{|H|}:\mathcal{H}\subseteq\mathcal{F}^\downarrow\textrm{ is unbounded in }(\mathcal{F}^\downarrow,{}^*\hspace{-.13cm}\geq)\}$$

We see then that working in $V[d]$ we have that $\mathfrak{b}(V\cap\omega^\omega)=\mathfrak{b}^V$, $\mathfrak{d}(V\cap\omega^\omega)=\mathfrak{d}^V$ and $\mathfrak{b}^\downarrow(V\cap\omega^\omega)=\mathfrak{b}^V$. In section 4 of his paper Laflamme constructed several ZFC examples of bounded families of $\mathcal{F}$ to achieve various values of $\mathfrak{b}(\mathcal{F})$, $\mathfrak{d}(\mathcal{F})$ and $\mathfrak{b}^\downarrow(\mathcal{F})$. In each of his constructions one of these three parameters is countable. Since for any regular uncountable cardinals $\kappa\leq\lambda$ one may find a ground model $V$ with $\mathfrak{b}^V=\kappa$ and $\mathfrak{d}^V=\lambda$, our corollary gives for any such $\kappa$, $\lambda$ the consistency of the existence of a bounded family $\mathcal{F}$ with $\mathfrak{b}(\mathcal{F})=\kappa$, $\mathfrak{d}(\mathcal{F})=\lambda$ and $\mathfrak{b}^\downarrow(\mathcal{F})=\kappa$. Laflamme also specifically asked whether one could consistently obtain a family with $\mathfrak{b}^\downarrow(\mathcal{F})=\mathfrak{b}$ and $\mathfrak{b}(\mathcal{F})=\mathfrak{b}$. He showed that consistently there is no such family. Since $V[d]$ satisfies $\mathfrak{b}=\omega_1$ (\cite{BrJS}), by starting with a model $V$ with $\mathfrak{b}^V=\omega_1$ we find that in $V[d]$ such a family does exist.

Now we turn to some recent work of Brendle and L{\"o}we. In their paper \cite{BrL} the authors were concerned with building models containing many Hechler generic reals but no eventually different reals. One consequence of their work is the following dichotomy theorem for reals in $\Dtree$. The authors originally stated their result as holding for `Hechler reals', a term the authors use as a catch-all. 
The proof they gave was for $\Dtree$.


\begin{thm}[Brendle and L{\"o}we] \label{BLthm}
Let $d$ be a $\Dtree$-generic real and let $x\in V[d]\cap\omega^\omega$. Then

\begin{enumerate}
\item either $x$ is dominating, or
\item $x$ is not eventually different over $V$ (that is, there is some $f\in V\cap\omega^\omega$ such that $(\exists^\infty n)f(n)=x(n))$.
\end{enumerate}
\end{thm}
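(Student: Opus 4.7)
The plan is to prove the contrapositive: if $x \in V[d] \cap \omega^\omega$ is eventually different from every ground-model real, then $x$ is dominating. Equivalently, I fix a name $\dot x \in V^{\Dtree}$ and suppose some $T_0 \in \Dtree$ and some $f \in V \cap \omega^\omega$ satisfy $T_0 \Vdash (\exists^\infty n) \dot x(n) < \check f(n)$; the goal is to produce $T^* \leq T_0$ and $g \in V \cap \omega^\omega$ with $T^* \Vdash (\exists^\infty n) \dot x(n) = \check g(n)$, which contradicts the hypothesis of eventual difference from every ground-model real.

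The starting ingredient is a density lemma: for every $T \leq T_0$ and every $K \in \omega$ there exist $T' \leq T$, $n \geq K$, and $v < f(n)$ with $T' \Vdash \dot x(n) = v$. This follows from the forcing hypothesis together with the finiteness of $\{0,1,\ldots,f(n)-1\}$, and because $n$ can be taken arbitrarily large, successive invocations can always be arranged to use pairwise distinct target indices. With this in hand I would run a fusion in the spirit of Claim 1 in the proof of Lemma \ref{mainTreeLemma}: construct $U^0 \supseteq U^1 \supseteq \cdots$ in $\Dtree$ with $U^0 = T_0$, freezing all nodes of length $\leq l$ by stage $l$, while inductively growing a partial function $g$. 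At stage $l$, for each node $s$ of $U^{l-1}$ at level $l$ and each pair $(s, K)$ with $K \leq l$ from a fixed countable enumeration, apply the density lemma inside $(U^{l-1})_s$ to obtain a prescribing subtree $T^{s, K}$ forcing $\dot x(n_{s, K}) = v_{s, K}$ with $n_{s, K} \geq K$ chosen distinct from all previously used indices; set $g(n_{s, K}) := v_{s, K}$ and thin $U^{l-1}$ below cofinitely many successors of $s$ so as to be compatible with the relevant $T^{s, K}$'s. Taking $T^* = \bigcap_l U^l$ and extending $g$ by zero on unfilled indices, the standard verification shows that every branch through $T^*$ traverses prescribing subtrees at infinitely many stages, so $T^* \Vdash (\exists^\infty n) \dot x(n) = \check g(n)$.

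The main obstacle is that the prescribing subtrees $T^{s, K}$ produced by the density lemma will in general have stems strictly extending $s$, while every $\Dtree$-condition must retain cofinitely many successors at each node above its stem. Unlike the trees $T^{s, n}$ appearing in Claim 1 of Lemma \ref{mainTreeLemma}, whose stems were exactly $s$ and which could therefore be absorbed by intersecting at cofinitely many successors, here the absorption step requires extra care: one manages the stem-length mismatch by running the density lemma separately at each cofinite successor $s \smallfrown m$ rather than at $s$ itself, and then integrating the resulting per-successor prescriptions into the fusion through a careful enumeration of the pairs $(s, K)$ that guarantees fairness across all branches. Working out this bookkeeping is the heart of the argument, but it proceeds by standard $\Dtree$-fusion techniques and yields the required $T^*$ and $g$, and with them the desired contradiction.
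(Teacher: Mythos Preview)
The paper does not give its own proof of this statement; it is attributed to Brendle and L\"owe and cited from \cite{BrL}. What the paper does prove is the $\D$-analogue (the Corollary immediately following), and it does so not by fusion but by invoking the structural characterization (2) of Lemma~\ref{MainLemma} together with the observation that for any stem $t$, ``$(\forall i)\,t$ forces $i\leq\dot x(n)$'' is equivalent to ``$(\forall i)\,t$ forces $\dot x(n)\neq i$''. From this one reads off a single ground-model $y$ directly from the set $B$, and a short density argument (no fusion) shows $q\Vdash(\exists^\infty n)\,\dot x(n)=y(n)$. The $\Dtree$-proof in \cite{BrL} proceeds along the same lines using the $\Dtree$ rank analysis referenced in the paper's introduction.

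Your fusion approach, by contrast, has a genuine gap at exactly the point you flag. The density lemma hands you a condition $T'$ with stem strictly extending $s$, and your proposed fix---run the lemma at each successor $s\smallfrown m$---does not help: the condition it produces below $s\smallfrown m$ again has stem strictly extending $s\smallfrown m$. Inserting such a condition into the fusion destroys cofinite branching at $s\smallfrown m$, and since a $\Dtree$-condition must branch cofinitely at \emph{every} node above its stem, the limit tree $T^*$ will fail to be a condition (it will be riddled with single-successor linear segments). This is not a bookkeeping issue; no enumeration of pairs $(s,K)$ repairs it. Note the contrast with Claim~1 of Lemma~\ref{mainTreeLemma} that you cite: there the trees $T^{s,n}$ are \emph{given} with stem exactly $s$, so intersecting below cofinitely many successors is unproblematic. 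In your situation nothing supplies conditions with prescribed stems, and that is precisely the missing ingredient---one needs a rank-type argument (as in \cite{BrL}) or an analogue of Lemma~\ref{MainLemma} to show that, densely, one can pin down a value of $\dot x(n)$ for some large $n$ \emph{without} lengthening the stem.
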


Using characterization (2) from Lemma \ref{MainLemma} we get the same dichotomy for reals in $\D$.

\begin{cor}
Let $d$ be a $\D$-generic real and let $x\in V[d]\cap\omega^\omega$. Then

\begin{enumerate}
\item either $x$ is dominating, or
\item or $x$ is not eventually different over $V$.
\end{enumerate}
\end{cor}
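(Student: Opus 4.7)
The plan is to pass to $\Dnd$ via Theorem \ref{HechlerEquiv} and then extract a specific ground-model value of $\dot{x}$ from the combinatorial obstruction captured by the set $B$ of Lemma \ref{MainLemma}. Fix $\dot{x}\in V^{\Dnd}\cap\omega^\omega$ and a condition $p\in\Dnd$; it suffices to produce $q\leq p$ forcing either that $\dot{x}$ is dominating or that $\dot{x}$ is not eventually different over $V$. If no $q\leq p$ forces $\dot{x}$ dominating, then $p$ forces $\dot{x}$ is not dominating, and running the proof of Lemma \ref{MainLemma} as a contrapositive below $p$ yields some $p^*\leq p$ such that every $r\leq p^*$ has a stem $s\leq r$ with $s\in B$. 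The rest of the argument constructs $f\in V\cap\omega^\omega$ witnessing $p^*\Vdash(\exists^\infty n)\dot{x}(n)=f(n)$.

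The technical heart is a pointwise extraction: if $u\in\omega^{\nearrow<\omega}$ fails to force $i\leq\dot{x}(n)$ for some $i$, then letting $i\geq 1$ be minimal, $u$ forces $(i-1)\leq\dot{x}(n)$. A short commitment-merging argument, combining a commitment that witnesses $(i-1)\leq\dot{x}(n)$ with the absence of any commitment forcing $i\leq\dot{x}(n)$, shows that $u$ favors the specific value $\dot{x}(n)=i-1$. This failure occurs at $n=\phi(u)-1$ when $\phi(u)<\infty$, and at arbitrarily large $n$ when $\phi(u)=\infty$. Applied to $s\in B$ with witness length $m=m(s)$ and witnessing sequences $\{t_l^s\}$, combining $t_l^s(0)\to\infty$ with $\phi(s\smallfrown t_l^s)\to\infty$ yields, for any prescribed $L,N\in\omega$, some $l$, $n\geq N$, and $j$ with $t_l^s(0)\geq L$ and $s\smallfrown t_l^s$ favoring $\dot{x}(n)=j$.

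Next we bookkeep. Enumerate $B\times\omega$ as $\{(s_i,L_i):i\in\omega\}$ so that each pair appears infinitely often, and inductively pick $(n_i,j_i,t_i)$ with $t_i\in\omega^{\nearrow m(s_i)}$, $t_i(0)\geq L_i$, $n_i\geq i$, $n_i\notin\{n_0,\ldots,n_{i-1}\}$, and $s_i\smallfrown t_i$ favoring $\dot{x}(n_i)=j_i$. Define $f(n_i)=j_i$ and $f(n)=0$ elsewhere. To verify $p^*\Vdash(\exists^\infty n)\dot{x}(n)=f(n)$, given $r=\langle s^r,f^r\rangle\leq p^*$ and $N\in\omega$, pick $s\leq r$ with $s\in B$, set $L^*=\max\{f^r(|s|+j):j<m(s)\}$, and choose $i\geq N$ with $(s_i,L_i)=(s,L^*)$. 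Since $t_i$ is nondecreasing and $t_i(0)\geq L^*$ we have $s\smallfrown t_i\leq r$, and as $s\smallfrown t_i$ favors $\dot{x}(n_i)=f(n_i)$, some extension of $r$ forces $\dot{x}(n_i)=f(n_i)$, with $n_i\geq N$.

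The main obstacle is the extraction step: $s\in B$ records only that $s$ forces $i\leq\dot{x}(n)$ for all $i$ using possibly different commitments for each $i$, so membership in $B$ alone does not pin down a single favored value of $\dot{x}(n)$. The commitment-merging argument that converts this disjunctive forcing information into a definite favored value is what makes the diagonalization go through and produces a usable $f\in V$.
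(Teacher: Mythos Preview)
Your proposal is correct and follows essentially the same route as the paper. Both arguments pass to $\Dnd$, invoke the relativized contrapositive of Lemma~\ref{MainLemma} to find $p^*$ below which stems in $B$ are dense, and then exploit the key observation that the failure of ``$(\forall i)\,t$ forces $i\le\dot{x}(n)$'' yields a \emph{specific} value $j$ with $t$ favoring $\dot{x}(n)=j$; the paper phrases this via the equivalence $(\forall i)\,t$ forces $\dot{x}(n)\geq i \Leftrightarrow (\forall i)\,t$ forces $\dot{x}(n)\neq i$, while you obtain it by minimizing $i$ and merging commitments, but the content is the same. Your explicit bookkeeping with the enumeration of $B\times\omega$ and the density verification simply spell out what the paper compresses into the single sentence ``we may define a function $y\in\omega^\omega$ so that \dots'' and the bare assertion $q\Vdash(\exists^\infty n)\,\dot{x}(n)=y(n)$. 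One small point to make explicit: when you form $s\smallfrown t_i$ you should also ensure $t_i(0)\geq s(|s|-1)$ so the concatenation is nondecreasing, but this is automatic since $s_i\smallfrown t_i$ must already be a valid $\Dnd$ stem for the favoring clause to make sense.
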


\begin{proof}
By Theorem \ref{HechlerEquiv} we may work with $d$ a $\Dnd$-generic real instead. Let $\dot{x}\in V^\D\cap\omega^\omega$ and suppose $$p\Vdash``\dot{x}\textrm{ is not dominating"}.$$ Using a version of Lemma \ref{MainLemma} relativized to $\Dnd$ restricted to conditions below $p$, there is $q\leq p$ such that $$(\forall r\leq q)(\exists s\leq r)s\in B.$$ For each $s\in B$ let $\{t_l^s:l\in\omega\}$ be a witnessing sequence. 

Now notice that $$(\forall i)t\textrm{ forces }\dot{x}(n)\geq i\Leftrightarrow (\forall i)t\textrm{ forces }\dot{x}(n)\not=i.$$ Thus if $\phi(t)>N$ that means there exists some $n\geq N$ and $i$ such that $t$ favors $\dot{x}(n)=i$. So we may define a function $y\in\omega^\omega$ so that $$(\forall s\in B)(\forall N)(\exists n,l>N)s\smallfrown t_l^s\textrm{ favors }\dot{x}(n)=y(n).$$ Then $$q\Vdash(\exists^\infty n)\dot{x}(n)=y(n).$$
\end{proof}

Also in \cite{BrL} the authors conjectured (Conjecture 15) that given a Hechler real $d$ and a new real $x$ in $V[d]$ either $V[x]$ is equivalent to a Hechler extension of $V$ or $V[x]$ is equivalent to a Cohen extension of $V$. The authors there use the term `Hechler real' as a catch-all, and so their conjecture has several interpretations. Our results show that whether one interprets the term `Hechler real' in their conjecture to mean $\Dtree$-generic real or interprets it to mean $\D$-generic real the conjecture is false. This is because (by Proposition \ref{mutAdd}) forcing with $\D$ and $\Dtree$ each add reals generic for the other, but a $\D$-generic extension is not the same as a $\Dtree$-generic extension.

We do not know if the following trichotomous reinterpretation of their conjecture holds.

\begin{conj}
Let $d$ be a real which is either $\D$-generic or $\Dtree$-generic, and let $x\in V[d]\cap\omega^\omega$ be a new real. Then exactly one of the following holds
\begin{enumerate}
\item $V[x]$ is equivalent to an extension of $V$ by $\D$,
\item $V[x]$ is equivalent to an extension of $V$ by $\Dtree$, or
\item $V[x]$ is a equivalent to an extension of $V$ by $\mathbb{C}$.
\end{enumerate}
\end{conj}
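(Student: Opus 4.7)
The plan is to reduce to two cases via the dichotomy already exploited in this section: by Theorem \ref{BLthm} (in the tree case) or its Corollary (in the standard case), every new $x \in V[d] \cap \omega^\omega$ is either dominating over $V$ or fails to be eventually different over $V$. These two alternatives should correspond to options (1)/(2) versus option (3) of the conjecture.

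For the dominating case with $d$ a $\Dnd$-generic (which by Theorem \ref{HechlerEquiv} also handles $\D$), I would apply Theorem \ref{DomChar} to obtain $z_0, z_1 \in V \cap \omega^{\nearrow\omega}$ with $z_0 \circ d \circ z_1 \leq^* x$. The idea is that $x$ must encode, via its comparison with the sandwich composition, enough of the underlying Hechler generic $d$ that one can recover a real $d^* \in V[x]$ which is $\D$-generic over $V$ and satisfies $V[x] = V[d^*]$. An analogous implementation in the $\Dtree$-case would first require a representation theorem for dominating reals in $V^\Dtree$ paralleling Theorem \ref{DomChar}; the absence of such a theorem here is itself a prerequisite obstacle.

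For the not-eventually-different case, fix $f \in V \cap \omega^\omega$ with $f(n) = x(n)$ for infinitely many $n$. The set $S = \{n : f(n) = x(n)\}$ together with the restriction $x \upharpoonright (\omega \setminus S)$ carries the new information in $x$; the hope is to extract from this data a Cohen real $c$ over $V$ with $V[x] = V[c]$, yielding option (3). Care is needed because $x$ itself need not be Cohen over $V$ — only a coded version should be — and because in the $\Dtree$-case the strong constraints of Theorem \ref{DtreeNotDom} must be reconciled with the freedom needed to code a Cohen real.

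The principal obstacle, and the reason the conjecture remains open, is rigidity: one must show that $V[x]$ is \emph{exactly} one of the three canonical extensions, not merely that it contains a generic for one of them. Since Proposition \ref{mutAdd} shows that $\D$ and $\Dtree$ each add generics for the other while being nonequivalent as forcings, an intermediate model $V[x]$ might simultaneously contain a $\D$-generic and a $\Dtree$-generic without being equivalent to either canonical extension. Ruling this out requires sharp control over the minimality of $V[x]$: one needs a definable procedure recovering $x$ from the chosen generic $d^*$ inside $V[x]$, together with invariants separating the three possibilities that are absolute between $V[x]$ and $V[d]$ — here Theorems \ref{DtreeNotDom} and \ref{DDom} provide the natural candidates, since they isolate combinatorial features distinguishing the two Hechler extensions. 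Developing such a definability-plus-invariants framework, and supplying the missing $\Dtree$-analogue of Theorem \ref{DomChar}, is where I expect the serious difficulty to lie.
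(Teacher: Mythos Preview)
The statement you are addressing is a \emph{conjecture} in the paper, not a theorem; the authors explicitly write ``We do not know if the following trichotomous reinterpretation of their conjecture holds'' and offer no proof. So there is no argument in the paper to compare your proposal against.

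Your proposal is, appropriately, not a proof but a discussion of a possible strategy together with an honest inventory of obstacles. That discussion is reasonable and largely well-aimed: the dichotomy from Theorem~\ref{BLthm} and its corollary is indeed the natural case split, and you correctly flag the central difficulty---that containing a generic for one of $\D$, $\Dtree$, $\mathbb{C}$ is far weaker than being \emph{equal} to such an extension, precisely because of the mutual-addition phenomenon in Proposition~\ref{mutAdd}. Two remarks. First, the paper does indicate (in Section~5, opening paragraph) that a $\preceq$-least dominating real exists in $V^{\Dtree}$ as well, via a modified version of the Theorem~\ref{DomChar} argument; so the ``missing $\Dtree$-analogue'' you mention is not entirely absent, though it is not developed in detail. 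Second, your treatment of the non-dominating case is too optimistic: there is no reason the coded data $S$ and $x\upharpoonright(\omega\setminus S)$ should generate a model that is exactly a Cohen extension, and the Brendle--L{\"o}we dichotomy gives only that $x$ agrees infinitely often with some ground-model real, which is much weaker than ``$V[x]$ is Cohen over $V$''. These are genuine gaps, but since the paper itself leaves the conjecture open, that is expected.
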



\section{Forcing extensions with no $\preceq$-least dominating real}

Let $V[G]$ be some generic extension of the universe. Given $f,g\in V[G]\cap\omega^\omega$ we write $f\preceq g$ if there are $z_0,z_1\in V\cap\omega^{\nearrow\omega}$ such that $z_0\circ f\circ z_1\leq^* g$. It is easy to see that $\preceq$ gives a preordering on $\omega^\omega$, and furthermore that $f\preceq g$ is equivalent to the existence of $z_0,z_1\in V\cap\omega^{\nearrow\omega}$ such that $f\leq^*z_0\circ g\circ z_1$. Theorem \ref{DomChar} tells us that in the model obtained by adding a nondecreasing Hechler real $d$ we have that $d$ is a $\preceq$-least dominating real: for any dominating real $y$ in $V[d]$ we have $d\preceq y$.

So $V^{\Dnd}$ contains a $\preceq$-least dominating real, and by forcing equivalence so does $V^{\D}$. By suitably modifying the arguments from section 4 one can also show that a $\preceq$-least dominating real is present in $V^\Dtree$. If $d$ is a tree Hechler real (which we may assume is strictly increasing), then a $\preceq$-least real $d^\downarrow$ is defined by the equation $$d^\downarrow(n)=d(k+1)\textrm{ if }n\in[d(k-1),d(k)).$$ The key difference in the argument for $\Dtree$ is that instead of bounding $\phi(s)$ by a function of the length $|s|$, one must be content to bound $\phi$ by a function of $s\upharpoonright|s|-1$.

Need there always exist $\preceq$-least reals in a ccc extension adding a dominating real? The answer is no. Suppose $V$ is a model of $\mathfrak{b}=\mathfrak{d}=\aleph_2$, so that $V$ contains a dominating family $\{z_\alpha:\alpha<\omega_2\}$ well-ordered by $\leq^*$. Let $V[G]$ be an extension satisfying MA and $2^{\aleph_0}=\aleph_3$. There is no $\preceq$-least dominating real $d$ in $V[G]$. If there were then $\{z_\alpha:\alpha<\omega_2\}$, $\{z'_\alpha\circ d\circ z'_\alpha:\alpha<\omega_2\}$ would be an $(\omega_2,\omega_2)$-gap in the sense of \cite{Sch}, but Proposition 90 of that article shows that no such gap exists.

This simple argument does not work if the ground model satisfies CH. Together with Itay Neeman we found a construction to produce an appropriate ccc forcing extension over a model of CH. In fact, this contruction produces a model that not only has no $\preceq$-least dominating reals but also has no $\preceq$-minimal dominating reals; that is, no dominating reals $y_0$ such that whenever $y$ is dominating and $y\preceq y_0$ holds then it follows that $y_0\preceq y$. The disadvantage of the argument is that it uses large cardinals and a fair amount of technical overhead. The main ``trick'' used in the argument is rather nice and may be applicable in other situations, so we will include a proof.

The general idea of the construction is the natural one. We do an $\omega_1$-length finite support iteration of ccc forcings which at each stage places a dominating real that lies below all the dominating reals added so far. The tricky part is in making sure that each iterand in the forcing is actually ccc. To show this we will use an absoluteness argument; this is where the large cardinal assumptions come in.

The forcing we will iterate is a slight modification of the Laver interpolation order given as Definition 13 in \cite{Sch}. Let $\mathcal{F}_0$, $\mathcal{F}_1$ be two subsets of $\omega^\omega$ so that every member of $\mathcal{F}_0$ is dominated by every member of $\mathcal{F}_1$. Say that a real $h\in\omega^\omega$ interpolates $\mathcal{F}_0$ and $\mathcal{F}_1$ if $f_0\leq^*h$ and $h\leq^*f_1$ for every $f_0\in\mathcal{F}_0$ and every $f_1\in\mathcal{F}_1$. The forcing $\mathbb{Q}(\mathcal{F}_0,\mathcal{F}_1)$ consists of conditions $\langle s,f_0,f_1\rangle\in\mathbb{Q}(\mathcal{F}_0,\mathcal{F}_1)$ satisfying
\begin{enumerate}
\item $s\in\omega^{<\omega},f_0\in\mathcal{F}_0,f_1\in\mathcal{F}_1$
\item $(\forall n\geq|s|)f_0(n)\leq f_1(n)$.
\end{enumerate}
and is ordered by $\langle s',f'_0,f'_1\rangle\leq\langle s,f_0,f_1\rangle$ if
\begin{enumerate}
\item $s\subseteq s'$
\item $(\forall n\geq |s|)f_0(n)\leq f'_0(n), f_1'(n)\leq f_1(n)$
\item $(\forall n\in|s'|\setminus |s|)f_0(n)\leq s'(n)\leq f_1(n).$
\end{enumerate}

It is not hard to see that $\mathbb{Q}(\mathcal{F}_0,\mathcal{F}_1)$ adds a real interpolating $\mathcal{F}_0$ and $\mathcal{F}_1$. Unfortunately this forcing may collapse $\omega_1$. The following proposition is very similar to Lemma 45 in \cite{Sch} and can be proved in an identical way.

\begin{prop}\label{extCCC}
Let $Q$ be a transitive model of ZFC containing $\mathcal{F}_0$ and $\mathcal{F}_1$. If there is a transitive model $Q^*$ of ZFC with $Q\subseteq Q^*$ and $\omega_1^Q=\omega_1^{Q^*}$ and so that $Q^*$ contains an interpolant of $\mathcal{F}_0$ and $\mathcal{F}_1$, then in $Q$ the forcing $\mathbb{Q}(\mathcal{F}_0,\mathcal{F}_1)$ is ccc. 
\end{prop}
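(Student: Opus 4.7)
The plan is a standard absoluteness argument. Suppose, for contradiction, that $\{p_\alpha : \alpha < \omega_1^Q\} \in Q$ is an antichain in $\mathbb{Q}(\mathcal{F}_0,\mathcal{F}_1)$. Because $\omega^{<\omega}$ is countable, a pigeonhole inside $Q$ lets me pass to an uncountable sub-antichain all of whose elements share a common stem $s$; write $p_\alpha = \langle s,f_0^\alpha,f_1^\alpha\rangle$ from here on.

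Next I would move the analysis into $Q^*$. The indexed family remains uncountable there because $\omega_1^Q = \omega_1^{Q^*}$. Let $h \in Q^*$ interpolate $\mathcal{F}_0,\mathcal{F}_1$. For each $\alpha$ take the least $N_\alpha \geq |s|$ with $f_0^\alpha(n) \leq h(n) \leq f_1^\alpha(n)$ for all $n \geq N_\alpha$, which exists by the interpolation property. A second pigeonhole, now inside $Q^*$, on the countably many possible values of $N_\alpha$ yields a fixed $N$ and indices $\alpha < \beta$ with $N_\alpha = N_\beta = N$. Define $s' = s \smallfrown \langle h(|s|),\ldots,h(N-1)\rangle$. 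The key observation is that although $h$ need not lie in $Q$, the finite sequence $s'$ does, by absoluteness of $\omega^{<\omega}$ between transitive models of $\ZFC$. By choice of $N$, each extended coordinate $s'(n)$ satisfies $f_0^\gamma(n) \leq s'(n) \leq f_1^\gamma(n)$ for $\gamma \in \{\alpha,\beta\}$ and $n \in [|s|,N)$, so $s'$ is a legal stem strengthening of both $p_\alpha$ and $p_\beta$ in $\mathbb{Q}(\mathcal{F}_0,\mathcal{F}_1)^Q$. Moreover, on $[N,\infty)$ one has $\max(f_0^\alpha,f_0^\beta) \leq h \leq \min(f_1^\alpha,f_1^\beta)$ pointwise, which leaves room to select in $Q$ commitments $f_0'' \in \mathcal{F}_0$ and $f_1'' \in \mathcal{F}_1$ that respectively dominate $f_0^\alpha,f_0^\beta$ pointwise and are dominated by $f_1^\alpha,f_1^\beta$ pointwise on $[|s|,\infty)$. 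The resulting $r = \langle s',f_0'',f_1''\rangle$ lies in $Q$ and is a common refinement of $p_\alpha$ and $p_\beta$, contradicting the assumption that $\{p_\alpha\}$ was an antichain in $Q$.

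The step I expect to require the most care is the last one: ensuring that the common refinement $r$ genuinely lies in $\mathbb{Q}(\mathcal{F}_0,\mathcal{F}_1)^Q$, i.e.\ that its commitment bounds belong to $\mathcal{F}_0^Q$ and $\mathcal{F}_1^Q$ (rather than merely being reals in $Q$ that sit in the available gap). As in Scheepers' Lemma 45, this should reduce to a directedness property of $\mathcal{F}_0,\mathcal{F}_1$ that is available in the iteration setting being constructed in this section. The other ingredients --- the two pigeonholes, the absoluteness of finite sequences of naturals, and the $\Delta_0$ nature of the pointwise comparisons defining the ordering on $\mathbb{Q}(\mathcal{F}_0,\mathcal{F}_1)$ --- are routine.
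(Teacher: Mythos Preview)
Your overall strategy is the standard one, and it is essentially what the paper has in mind (the paper gives no proof, only a pointer to Scheepers' Lemma~45). However, there is a genuine slip in the middle of your argument.

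You define $N=N_\alpha=N_\beta$ as the least integer $\geq|s|$ such that $f_0^\gamma(n)\leq h(n)\leq f_1^\gamma(n)$ holds for all $n\geq N$. You then set $s'=s\smallfrown\langle h(|s|),\dots,h(N-1)\rangle$ and assert that $f_0^\gamma(n)\leq s'(n)\leq f_1^\gamma(n)$ for $n\in[|s|,N)$. But this is exactly the interval on which the interpolant $h$ is \emph{not} yet known to lie between $f_0^\gamma$ and $f_1^\gamma$; the choice of $N$ guarantees the squeeze only from $N$ onward. So as written, $s'$ need not be a legal stem extension of either $p_\alpha$ or $p_\beta$.

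The fix is cheap: after thinning to a common $N$, perform one more pigeonhole in $Q^*$ on the finite data $(f_0^\alpha\!\upharpoonright\!N,\,f_1^\alpha\!\upharpoonright\!N)$. This yields $\alpha\neq\beta$ whose side conditions agree on $[0,N)$, so that on $[|s|,N)$ one may take $s'(n)=f_0^\alpha(n)=f_0^\beta(n)$ (or any value in the common interval $[f_0^\alpha(n),f_1^\alpha(n)]$), and the stem extension is now legitimate for both. From $N$ onward your $h$-squeeze argument works as you wrote it and gives $\max(f_0^\alpha,f_0^\beta)\leq\min(f_1^\alpha,f_1^\beta)$, so $\langle s',\max(f_0^\alpha,f_0^\beta),\min(f_1^\alpha,f_1^\beta)\rangle$ is a condition. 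Your closing remark about needing a directedness property of $\mathcal{F}_0,\mathcal{F}_1$ is exactly right: in the application $\mathcal{F}_0=V\cap\omega^\omega$ is closed under pointwise $\max$ and $\mathcal{F}_1$ (the dominating reals) is closed under pointwise $\min$, so the common refinement lives in $Q$ as required.
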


Given an ordinal $\alpha$ we let $\mathbb{P}_\alpha$ be the $\alpha$-length finite support iterated forcing given by $\langle\dot{\mathbb{Q}}_\beta:\beta<\alpha\rangle$ which we describe as follows. We take $\dot{\mathbb{Q}}_0$ to be Hechler forcing $\mathbb{D}$. Given $0<\beta<\alpha$ with $\mathbb{P}_\beta$ already defined, we take $\dot{\mathbb{Q}}_\beta$ to be a $\mathbb{P}_\beta$-name for $\mathbb{Q}(\mathcal{F}_0,\mathcal{F}_1)$ where $\mathcal{F}_0$ is $V\cap\omega^\omega$ and $\mathcal{F}_1$ is the collection of dominating reals in $V^{\mathbb{P}_\beta}$.

\begin{lemma} \label{techLemma}
Assume there exists a sharp for $\omega$ many Woodin cardinals. Let $\alpha\leq\omega_1$. Then $\mathbb{P}_\alpha$ is ccc.
\end{lemma}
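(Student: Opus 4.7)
The plan is to argue by induction on $\alpha\leq\omega_1$.  The case $\alpha=0$ is Hechler forcing $\mathbb{D}$, which is $\sigma$-centered and hence ccc.  Limit stages are immediate from the classical theorem that a finite-support iteration of ccc posets is ccc (applied to the inductive hypothesis for all $\beta<\alpha$), and in particular this handles $\alpha=\omega_1$.  So the entire content lies in the successor step: assuming $\mathbb{P}_\beta$ is ccc, one must show that $\Vdash_{\mathbb{P}_\beta}``\dot{\mathbb{Q}}_\beta\text{ is ccc}"$.

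Fix a $\mathbb{P}_\beta$-generic $G_\beta$ and work in $W=V[G_\beta]$, with $\mathcal{F}_0=V\cap\omega^\omega$ and $\mathcal{F}_1$ the collection of dominating reals of $W$.  By Proposition \ref{extCCC} it suffices to produce a transitive model $Q^*\models\mathrm{ZFC}$ with $W\subseteq Q^*$, $\omega_1^{Q^*}=\omega_1^{W}$, and containing a real interpolating $\mathcal{F}_0$ and $\mathcal{F}_1$.  The na\"ive move of forcing over $W$ with $\mathbb{Q}(\mathcal{F}_0,\mathcal{F}_1)$ itself is useless, since $\omega_1$-preservation of that forcing is exactly what we are trying to prove; this is the circularity the trick must break.

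The device is generic absoluteness supplied by the large cardinal hypothesis.  A sharp for $\omega$ Woodin cardinals gives $L(\R)$-absoluteness under set forcing; in particular, projective statements with real parameters are absolute between $W$ and its set-generic extensions.  The plan is to express ``there is an $\omega_1$-preserving outer model containing an interpolant of $\mathcal{F}_0$ and $\mathcal{F}_1$'' as such an absolute statement, verify it in a countable surrogate, and reflect it back.  Concretely, take a countable elementary $M\prec H_\theta^{W}$ capturing $\mathbb{P}_\beta$, $\mathcal{F}_0$ and the definition of $\mathcal{F}_1$, and let $\bar M$ be its transitive collapse; the images $\bar{\mathcal{F}}_0,\bar{\mathcal{F}}_1$ are then countable with $\bar{\mathcal{F}}_1$ being $\leq^*$-directed, so a direct diagonal construction over $\bar M$ produces an interpolant for them in some countable extension of $\bar M$.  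Projective absoluteness then lifts this back to yield an actual $Q^*$ as required over $W$.

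The main obstacle is the reflection step.  The family $\mathcal{F}_1$ is defined in terms of $W$ and its dominating reals, so care is needed to code it together with $\mathcal{F}_0$ as a parameter to which the absoluteness applies; similarly, the assertion ``an $\omega_1$-preserving outer model containing an interpolant exists'' must be written in a shape that is honestly absolute between models with correctly aligned $\omega_1$.  This is precisely the bookkeeping that forces the use of genuine large cardinals, rather than just the existence of an interpolant in some unspecified extension.  Once the coding is in place, the hypothesis of a sharp for $\omega$ Woodin cardinals supplies the transfer, and the induction closes.
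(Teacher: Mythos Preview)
Your inductive scaffolding is correct, and you have identified the right ingredients (Proposition \ref{extCCC}, a countable elementary submodel, and $L(\mathbb{R})$-absoluteness from the large cardinal hypothesis), but the way you assemble them has a genuine gap, and it differs structurally from what the paper does.

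The gap is in your reflection step. You want to apply Proposition \ref{extCCC} with $Q=W=V[G_\beta]$ itself, so you need a $Q^*\supseteq W$ with the same $\omega_1$ containing an interpolant of $\mathcal{F}_0,\mathcal{F}_1$. You then pass to a countable $\bar M\prec H_\theta^W$, build an interpolant for the collapsed families, and assert that ``projective absoluteness lifts this back.'' But the sentence you need to reflect---``there is an $\omega_1$-preserving transitive outer model of $W$ containing an interpolant of $\mathcal{F}_0$ and $\mathcal{F}_1$''---is not projective, nor even an $L(\mathbb{R})$ statement with real parameters: it quantifies over transitive outer models and takes as parameters the full sets $\mathcal{F}_0=V\cap\omega^\omega$ and $\mathcal{F}_1$. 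No amount of ``coding'' turns this into something to which generic absoluteness applies, and elementarity between $M$ and $H_\theta^W$ does not help either, since the outer model you build lives outside $\bar M$.

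The paper's argument is organized differently in two essential ways. First, the countable elementary submodel $M$ is taken inside $V$, not inside $W$; one then uses elementarity to \emph{reduce} the problem to showing that $\mathbb{Q}(\bar{\mathcal{F}}_0,\bar{\mathcal{F}}_1)$ is ccc in the countable model $Q[G]$, and Proposition \ref{extCCC} is applied there, not at the level of $W$. Second, the interpolant is not produced by an abstract diagonal argument but is chosen to be a real $\bar d\in V$ which is $\mathbb{D}$-generic over the collapse $Q$; the point of taking $\bar d\in V$ is that it is then automatically dominated by every element of $\bar{\mathcal{F}}_1$. What the large cardinal hypothesis is actually used for is to show that $G$ is $\mathbb{P}_\alpha^{Q[\bar d]}$-generic over $Q[\bar d]$, so that $Q^*=Q[\bar d][G]$ is a legitimate transitive ZFC model extending $Q[G]$ with the right $\omega_1$. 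This is where $L(\mathbb{R})$-absoluteness enters: the poset $\mathbb{P}_\alpha$ and its maximal antichains are $L(\mathbb{R})$-definable from real parameters, so a maximal antichain as computed in $Q[\bar d]$ really is a maximal antichain in $V$, and hence is met by $G$.
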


\begin{proof}
By induction on $\alpha$. The base case is trivial and the limit case follows from the inductive assumption and the fact that the iteration has finite support.

Assume by induction that $\mathbb{P}_\beta$ is ccc for all $\beta\leq\alpha$. By identifying nice names for reals with reals we may view $\mathbb{P}_\alpha$ as a subset of $\omega^\omega$. Let $G$ be $\mathbb{P}_\alpha$-generic over $V$. Let $\mathcal{F}_0$ be $V\cap\omega^\omega$ and $\mathcal{F}_1$ be the collection of dominating reals in $V[G]$. We need to show that $\dot{\mathbb{Q}}_\alpha[G]=\mathbb{Q}(\mathcal{F}_0,\mathcal{F}_1)$ is ccc.

Let $M$ be a countable elementary submodel of a large rank initial segment of $V$ with $\alpha\in M$, and let $\pi:M\rightarrow Q$ be the transitive collapse. Note that $\pi(\mathbb{P}_\alpha)=Q\cap\mathbb{P}_\alpha$. Because $\mathbb{P}_\alpha$ is ccc it follows that $G$ is $\pi(\mathbb{P}_\alpha)$-generic over $Q$. Let $\bar{\mathcal{F}}_0=Q[G]\cap\mathcal{F}_0$ (which is just $Q\cap\omega^\omega$) and let $\bar{\mathcal{F}}_1=Q[G]\cap\mathcal{F}_1$ so that $\pi(\dot{\mathbb{Q}}_\alpha)[G]=\mathbb{Q}(\bar{\mathcal{F}}_0,\bar{\mathcal{F}}_1)$.

By elementarity we need only show that $\mathbb{Q}(\bar{\mathcal{F}}_0,\bar{\mathcal{F}}_1)$ is ccc in $Q[G]$. By Proposition \ref{extCCC} it is enough to find a transitive model $Q^*$ extending $Q[G]$ with $\omega_1^{Q[G]}=\omega_1^{Q^*}$ and which contains a real interpolating $\bar{\mathcal{F}_0}$ and $\bar{\mathcal{F}_1}$. Let $\bar{d}\in V\cap\omega^\omega$ be $\mathbb{D}$-generic over $Q$. Then $\bar{d}$ is such an interpolant and so we just need to find an appropriate model $Q^*$ containing it.

Now note that $\mathbb{P}_\alpha$ is a set of reals definable in $L(\mathbb{R})$ (from real parameters). Since $\mathbb{P}_\alpha$ is ccc the collection of maximal antichains of $\mathbb{P}_\alpha$ is also a set of reals definable in $L(\mathbb{R})$. We use $\mathbb{P}_\alpha^{Q[\bar{d}]}$ to refer to $\mathbb{P}_\alpha$ computed relative to $Q[\bar{d}]$. By the inductive hypothesis applied inside $Q[\bar{d}]$ we have that $$Q[\bar{d}]\vDash\textrm{$\mathbb{P}_\alpha$ is ccc.}$$ If we show that $G$ is $\mathbb{P}_\alpha^{Q[\bar{d}]}$-generic over $Q[\bar{d}]$ the proof will be complete, for then $Q[\bar{d}][G]$ can serve as the desired $Q^*$.

\medskip

\noindent\emph{Claim.} Suppose $\varphi$ is a formula and $x\in Q[\overline{d}]\cap\omega^\omega$. Then $$L(\mathbb{R})\vDash\phi(x)\Longleftrightarrow Q[\bar{d}]\vDash ``L(\mathbb{R})\vDash\varphi(x)".$$

\begin{proof}
Here is where the large cardinal machinery comes in. We just give a sketch:

The theory of $L({\mathbb R})$ with parameter $x$ reduces to the theory of 
any model $(N;{\mathcal E})$, where ${\mathcal E}$ is a class of extenders rich enough to witness 
that $N$ has a sharp for $\omega$ Woodin cardinals, $x\in N$, and $(N;{\mathcal E})$ is 
iterable for iteration trees using extenders from ${\mathcal E}$. (See section 4 of \cite{Steel} 
where full iterability is used, or section 3 of \cite{Neeman} which uses $\omega$-iterability.) 
It is therefore enough to find, in $Q[\bar{d}]$, a model of this kind which is 
$\omega$-iterable in both $Q[\bar{d}]$ and $V$.

Let $V^Q_\theta$ be a rank initial segment of $Q$ large enough to contain a sharp for 
$\omega$ Woodin cardinals. Working in $Q$ using the genericity iterations of \cite{Neeman} one 
can find a countable model $N$, which embeds into $V^Q_\theta$, and a generic extension 
$N[g]$ of $N$ collapsing its first Woodin cardinal, so that $x\in N[g]$. Since $N$ embeds 
into $V^Q_\theta$, and since $Q[\bar{d}]$ is a small generic extension of $Q$, $N$ is 
$\omega$-iterable in $Q[\bar{d}]$. Since $V^Q_\theta$ embeds into a rank initial segment 
of $V$, so does $N$, and hence $N$ is $\omega$-iterable in $V$. The iterability transfers 
to $(N[g],{\mathcal E})$, where ${\mathcal E}$ consists of the natural extensions of extenders in 
$N$. $(N[g],{\mathcal E})$ is then $\omega$-iterable in both $Q[\bar{d}]$ and $V$, contains 
$x$, and has a sharp for $\omega$ Woodin cardinals, as required.
\end{proof}

Let $\mathcal{A}\in Q[\bar{d}]$ be such that $$Q[\bar{d}]\vDash``\mathcal{A}\textrm{ is a maximal antichain of $\mathbb{P}_\alpha$}".$$ As we observed above $\mathbb{P}_\alpha^{Q[\bar{d}]}$ is ccc from the point of view of $Q[\bar{d}]$ and so we may view $\mathcal{A}$ as an element of $\omega^\omega$. Applying the claim we have (in $V$) that $\mathcal{A}$ is a maximal antichain of $\mathbb{P}_\alpha$. Thus $G\cap\mathcal{A}\not=\varnothing$ and we are done.

\end{proof}

\begin{thm}
Suppose $V$ is a model of CH which contains a sharp for $\omega$ many Woodin cardinals. Then $\mathbb{P}_{\omega_1}$ is a ccc forcing which adds a dominating real but no $\preceq$-minimal dominating real.
\end{thm}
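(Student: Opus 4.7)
The plan is to verify three things: $\mathbb{P}_{\omega_1}$ is ccc, it adds a dominating real, and it adds no $\preceq$-minimal dominating real. The first follows directly from Lemma \ref{techLemma} at $\alpha=\omega_1$; the second holds because the Hechler real added at stage $0$ is dominating over $V$.

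The substantive task is the third claim. Let $y_0$ be any dominating real in $V[G_{\omega_1}]$. Because $\mathbb{P}_{\omega_1}$ is a ccc finite-support iteration, $y_0\in V^{\mathbb{P}_\alpha}$ for some $\alpha<\omega_1$; and since no member of $V$ dominates all of $V\cap\omega^\omega$, we have $\alpha>0$. Thus $\dot{\mathbb{Q}}_\alpha$ is interpreted as the interpolant forcing $\mathbb{Q}(\mathcal{F}_0,\mathcal{F}_1)$ with $\mathcal{F}_0=V\cap\omega^\omega$ and $y_0\in\mathcal{F}_1$. Let $h$ denote the generic interpolant added at stage $\alpha$. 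Then $h$ dominates every $f\in\mathcal{F}_0$, so $h$ is dominating; and $h\leq^*y_0$, so $h\preceq y_0$ via the identity maps. It remains to prove $y_0\not\preceq h$, which then gives $h\prec y_0$ and witnesses the non-minimality of $y_0$.

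For this I would argue by density, working in $V^{\mathbb{P}_\alpha}$. Fix $z_0,z_1\in V\cap\omega^{\nearrow\omega}$ and $N\in\omega$, and let $D_{z_0,z_1,N}$ be the set of conditions in $\mathbb{Q}(\mathcal{F}_0,\mathcal{F}_1)$ forcing ``there exists $n\geq N$ with $z_0(h(z_1(n)))<y_0(n)$''. Given any $\langle s,f_0,f_1\rangle$, observe that since $y_0$ dominates the ground-model real $(z_0\circ f_0\circ z_1)+1$, the inequality $z_0(f_0(z_1(n)))<y_0(n)$ holds for all sufficiently large $n$; pick such an $n\geq N$ with also $z_1(n)\geq|s|$. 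Extend $s$ to $s'$ of length $z_1(n)+1$ by setting $s'(m)=f_0(m)$ for $|s|\leq m\leq z_1(n)$; this is legal because $f_0(m)\leq f_1(m)$ for $m\geq|s|$, and $\langle s',f_0,f_1\rangle$ forces $h(z_1(n))=s'(z_1(n))=f_0(z_1(n))$, hence the desired inequality. Since each $D_{z_0,z_1,N}$ lies in $V^{\mathbb{P}_\alpha}$, genericity of $h$ yields that for every pair $(z_0,z_1)$ the set of $n$ with $z_0(h(z_1(n)))<y_0(n)$ is infinite. Therefore $y_0\not\leq^*z_0\circ h\circ z_1$ for every $z_0,z_1\in V\cap\omega^{\nearrow\omega}$, which is precisely $y_0\not\preceq h$.

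The principal obstacle, ccc-ness of $\mathbb{P}_{\omega_1}$, has been dispatched by Lemma \ref{techLemma} via the large cardinal hypothesis. The remaining density argument is essentially forced by the design of the interpolant forcing: one can push the value of the generic $h$ arbitrarily close to $\mathcal{F}_0$, and this suffices to defeat any ground-model rescaling $z_0\circ h\circ z_1$ of $h$.
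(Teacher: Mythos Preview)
Your proof is correct and follows the same overall strategy as the paper: invoke Lemma~\ref{techLemma} for ccc, note the stage-$0$ Hechler real for the existence of a dominating real, and for an arbitrary dominating $y_0$ find a countable stage $\alpha$ at which $y_0$ appears and use the interpolant $h$ added by $\dot{\mathbb{Q}}_\alpha$ to witness $h\preceq y_0$ and $y_0\not\preceq h$.

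The only noteworthy difference is in how you establish $y_0\not\preceq h$. The paper simply observes that every $z_0\circ y_0\circ z_1$ (with $z_0,z_1\in V\cap\omega^{\nearrow\omega}$) lies in $\mathcal{F}_1$, so $h\leq^* z_0\circ y_0\circ z_1$ for all such pairs; from this it is implicit that no $z_0\circ y_0\circ z_1$ can satisfy $z_0\circ y_0\circ z_1\leq^* h$ (otherwise $z_0\circ y_0\circ z_1$ would be $\leq^*$-least among dominating reals in $V^{\mathbb{P}_\alpha}$, which is impossible, or equivalently a short density argument shows the generic $h$ is not $=^*$ to any member of $\mathcal{F}_1$). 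You instead use the equivalent characterization $y_0\preceq h\iff(\exists z_0,z_1)\,y_0\leq^* z_0\circ h\circ z_1$ and give an explicit density argument pushing $h$ down to $f_0$ on a specified coordinate. Your route makes explicit a step the paper leaves to the reader, and your observation that the ccc plus finite support (rather than CH) already suffices to place $y_0$ at a countable stage is also correct.
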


\begin{proof}
From Lemma \ref{techLemma} we get that $\mathbb{P}_{\omega_1}$ is ccc. Because CH holds in the ground model any dominating real $d$ added by $\mathbb{P}_{\omega_1}$ is added at some countable stage $\mathbb{P}_\alpha$. Every real of the form $z_0\circ d\circ z_1$ belongs to $V^{\mathbb{P}_\alpha}$ and so the forcing $\dot{\mathbb{Q}}_\alpha$ adds a dominating real $h$ below them all. Thus $d$ is not a minimal dominating real for the preordering $\preceq$ in $V^{\mathbb{P}_\alpha}$.
\end{proof}

\bibliographystyle{alpha}
\bibliography{UnboundedDominating}

\end{document}